\begin{document}
\newcommand{\mres}{\mathbin{\vrule height 1.2ex depth 0pt width
0.13ex\vrule height 0.13ex depth 0pt width 0.9ex}}

\newtheorem{theorem}{Theorem}[section]
\newtheorem*{theorem*}{Theorem}
\newtheorem{conjecture}[theorem]{Conjecture}
\newtheorem*{conjecture*}{Conjecture}
\newtheorem{corollary}[theorem]{Corollary}

\newtheorem{proposition}[theorem]{Proposition}
\newtheorem*{proposition*}{Proposition}
\newtheorem{question}{Question}
\newtheorem{lemma}[theorem]{Lemma}
\newtheorem*{lemma*}{Lemma}
\newtheorem{cor}[theorem]{Corollary}
\newtheorem*{obs*}{Observation}
\newtheorem{obs}{Observation}
\newtheorem{condition}{Condition}
\newtheorem{definition}{Definition}
\newtheorem{remark}{Remark}
\newtheorem*{definition*}{Definition}
\newtheorem{proc}[theorem]{Procedure}
\newtheorem{problem}{Problem}
\newcommand{\comments}[1]{} 

%
\newcommand{\bF}{\mbf{F}}

\newcommand{\bd}{\mbf{d}}
\newcommand{\bw}{\mbf{w}}
\newcommand{\bbX}{\mathbb{X}}
\newcommand{\bY}{\mbf{Y}} 
\newcommand{\bby}{\mathbb{Y}}
\newcommand{\bbY}{\mathbb{Y}}
\newcommand{\bbV}{\mathbb{V}}
\newcommand{\balpha}{\mbf{\alpha}}
\newcommand{\bgamma}{\mbf{\gamma}}
\newcommand{\bL}{\mbf{L}}
\newcommand{\bZ}{\mbf{Z}}
\newcommand{\bbZ}{\mathbb{Z}}
\newcommand{\bXi}{\mbf{\Xi}}
\newcommand{\bW}{\mbf{W}}
\newcommand{\bV}{\mbf{V}}
\newcommand{\mI}{\mathcal{I}}
\newcommand{\mK}{{K}}
\newcommand{\mR}{\mathcal{R}}

\newcommand{\mL}{\mathcal{L}}
\newcommand{\mH}{\mathcal{H}_{{K}}}

\newcommand{\rhoT}{\rho_T}
\newcommand{\rhoL}{\rho_T^L}

\newcommand{\intkernele}{{\intkernel^{}}}
\newcommand{\intkernela}{{\intkernel^{A}}}
\newcommand{\intkernelxi}{\intkernel^{\xi}}
\newcommand{\bintkernela}{\bintkernel^{A}}
\newcommand{\bintkernelxi}{\bintkernel^{\xi}}
\newcommand{\bintkernele}{\bintkernel^{E}}
\newcommand{\intkernelvare}{\varphi^{E}}
\newcommand{\intkernelvara}{\varphi^{A}}
\newcommand{\intkernelvarxi}{\varphi^{\xi}}
\newcommand{\bintkernelvare}{\bintkernelvar^{E}}
\newcommand{\bintkernelvara}{\bintkernelvar^{A}}
\newcommand{\bintkernelvarxi}{\bintkernelvar^{\xi}}

\newcommand{\basis}{\psi}
\newcommand{\basise}{\psi^{\bx}}
\newcommand{\basisa}{\psi^{\dot\bx}}
\newcommand{\basisxi}{\psi^{\xi}}

\newcommand{\Tmixing}{T_{\mathrm{mix}}}
\newcommand{\eqnarrayterm}{\Delta}
\newcommand{\force}{\mbf{F}}
\newcommand{\forcex}{\mathbf{F}^{\bx}}
\newcommand{\forcev}{\force^{\bv}}
\newcommand{\forcexi}{\force^{\xi}}
\newcommand{\intkernel}{\phi}
\newcommand{\lintkernel}{\widehat{\intkernel}}
\newcommand{\bintkernel}{{\bm{\phi}}}
\newcommand{\blintkernel}{{\widehat{\bm{\phi}}}}
\newcommand{\intkernelvar}{\varphi}
\newcommand{\bintkernelvar}{{\bm{\varphi}}}
\newcommand{\bintkerneltrue}{{\bm{\phi}}_{true}}
\newcommand{\bintkernelv}{{\bm{\intkernel}}}
\newcommand{\rhsfo}{\mathbf{f}}
\newcommand{\hypspace}{\mathcal{H}}
\newcommand{\bhypspace}{\mbf{\mathcal{H}}}
\newcommand{\bhypspaceEA}{\mbf{\mathcal{H}}^{EA}}
\newcommand{\bhypspacexi}{\mbf{\mathcal{H}}^{\xi}}

\newcommand{\phiH}{\intkernel_{\mathcal{H}}}
\newcommand{\thetae}{{\theta^E}}
\newcommand{\thetaa}{{\theta^A}}

\newcommand{\bX}{\boldsymbol{X}}
\newcommand{\bx}{\boldsymbol{x}}
\newcommand{\br}{\boldsymbol{r}}
\newcommand{\by}{\boldsymbol{y}}
\newcommand{\bz}{\boldsymbol{z}}

\newcommand{\E}{\mathbb{E}}
\newcommand{\cov}{\mathrm{Cov}}
\newcommand{\state}{\boldsymbol{X}}
\newcommand{\traj}{\state_{[0,T]}}
\newcommand{\probIC}{\mu_0}
\newcommand{\intkerneltrue}{\phi_{true}}
\newcommand{\ml}{\text{MATLAB}^{\texttrademark}}
\newcommand{\apr}{\textit{ a priori }}

\newcommand{\norm}[1]{\left\| #1 \right\|}
\newcommand{\infnorm}[1]{\| #1\|_{\infty}}
\newcommand{\hnorm}[1]{\| #1\|_{\mathcal{H}}}
\newcommand{\Rhoxnorm}[1]{\| #1\|_{L^2(\rho_{\mbf{X}})}}
\newcommand{\rhotnorm}[1]{\| #1\|_{L^2(\tilde\rho_{T}^L)}}

\newcommand{\hinnerp}[2]{\langle #1, #2\rangle_{\mH}}
\newcommand{\Rhoxinnerp}[2]{\langle #1, #2\rangle_{L^2(\rho_{\mbf{X}})}}
\newcommand{\rhotinnerp}[2]{\langle #1, #2\rangle_{L^2(\tidle\rho_{T}^L)}}

\newcommand{\supp}[1]{\text{supp}(#1)}

\newcommand{\realR}[1]{\mathbb{R}^{#1}}
\newcommand{\real}{\mathbb{R}}
\newcommand{\R}{\real}

\newcommand{\argmin}[1]{\underset{#1}{\operatorname{arg}\operatorname{min}}\;}
\newcommand{\arginf}[1]{\underset{#1}{\operatorname{arg}\operatorname{inf}}\;}
\newcommand{\argmax}[1]{\underset{#1}{\operatorname{arg}\operatorname{max}}\;}

\def\Z{\mathbb Z}
\def\Za{\mathbb Z^\ast}
\def\Fq{{\mathbb F}_q}
\def\R{\mathbb R}
\def\N{\mathbb N}
\def\C{\mathbb C}
\def\k{\kappa}
\def\grad{\nabla}
\def\M{\mathcal{M}}
\def\S{\mathcal{S}}
\def\pt{\partial}
\newcommand{\todo}[1]{\textbf{\textcolor{red}{[To Do: #1]}}}
\newcommand{\note}[1]{\textbf{\textcolor{blue}{#1}} \\ \\}
\title{ On the Identifiability of  Nonlocal Interaction Kernels in First-Order Systems of Interacting Particles on Riemannian Manifolds}

\author[1]{Sui Tang \thanks{Email: suitang@ucsb.edu}}
\author[1]{Malik Tuerkoen\thanks{Email: 
malik@math.ucsb.edu}}
\author[1]{ Hanming Zhou \thanks{ Email: hzhou@math.ucsb.edu}}
\affil[1]{Department of Mathematics, University of California Santa Barbara}

\maketitle
\begin{abstract}

In this paper, we tackle a critical issue in nonparametric inference for systems of interacting particles on Riemannian manifolds: the identifiability of the interaction functions. Specifically, we define the function spaces on which the interaction kernels can be identified given infinite i.i.d observational derivative data sampled from a distribution. Our methodology involves casting the learning problem as a linear statistical inverse problem using an operator theoretical framework. We prove the well-posedness of the inverse problem  by establishing the strict positivity of a related integral operator and our analysis allows us to refine the results on specific manifolds such as the sphere and Hyperbolic space.  Our findings indicate that a numerically stable procedure exists to recover the interaction kernel from finite (noisy) data, and the estimator will be convergent to the ground truth. 
 These findings also answer an open question in \cite{Maggioni2021LearningIK} and demonstrate that least square estimators can be statistically optimal in certain scenarios. Finally, our theoretical analysis  could be extended to the mean-field case, revealing that the corresponding nonparametric inverse problem is ill-posed in general and necessitates effective regularization techniques.

\end{abstract}

\section{Introduction} 

Systems of interacting particles are ubiquitous in science and engineering, where the particles may refer to fundamental particles in physics, planets in astronomy, animals in ecology, and cells in biology. These systems exhibit a wide range of collective behaviors at different scales and levels of complexity arising from individual interactions among particles. Understanding and simulating such collective behavior requires the development of effective differential equation models, which is a central subject in applied mathematics.

In many applications, particles may be associated with state variables that are defined on or constrained to move in non-Euclidean spaces. This presents significant difficulties for modeling, analysis, and numerical simulation. Despite these challenges, there has been a growing interest in the last five years in modeling particles moving on various manifolds or surfaces. One such example is a simple first-order system that models the consensus behavior of opinion dynamics on a general Riemannian manifold. This system considers $N$ interacting particles on a Riemannian manifold $(\mathcal M,g)$,  which evolve according to a general dynamics equation shown in (\ref{system}):

\begin{align}\label{system}
\dot \bx_i=\frac1N\sum_{j=1}^N\phi(d(\bx_i,\bx_j))w(\bx_i,\bx_j), i=1,\cdots,N. 
\end{align}
Here, $\bx_i$ represents the position of the $i$-th particle on the manifold $\mathcal{M}$. The radial interaction kernel $\phi$ is a scalar valued function defined over $\mathbb{R}^+$; the distance function $d(\cdot, \cdot)$ with respect to the Riemannian metric $g$, and influence vector $w(\bx_i,\bx_j)\in T_{\bx_i}\mathcal{M}$ (the tangent space at $\bx_i$) are all defined in the equation. Other models, such as flocking models in \cite{ahn2021emergent,ahn2022emergent}, aggregation models in \cite{fetecau2018self,fetecau2021well}, and Kuramoto models in \cite{strogatz2000kuramoto,ha2022emergent}, have also been shown to reproduce various qualitative patterns of collective dynamics.

In the field of modelling, one of the greatest challenges is selecting the appropriate governing equations to accurately describe the desired collective behaviour. In the past, this task relied heavily on the expertise of the modeler. However, modern sensor and measurement technologies now allow for the collection of large amounts of high-quality data from a diverse range of systems. As a result, the discovery of interacting particle models that accurately match observational data has become a major area of focus in recent years. This task is often difficult and suffers from the curse of dimensionality, but many systems have low-dimensional structures that enable efficient data-driven methods. Examples of this include the inference of stochastic interacting particle systems in works such as \cite{kasonga1990maximum, bishwal2011estimation, gomes2019parameter, chen2021maximum, sharrock2021parameter, messenger2021learning, genon2022inference, della2022lan, yao2022mean}, and the learning of radial interaction kernels in \cite{lu2021learning}. Deterministic interacting particle systems on Euclidean domains have also been studied, with works such as \cite{bongini2017inferring, lu2019nonparametric, lu2021learning, miller2020learning} focusing on  nonparametric inference methods.

\begin{figure}[htbp]
    \centering
    \includegraphics[scale=0.4]{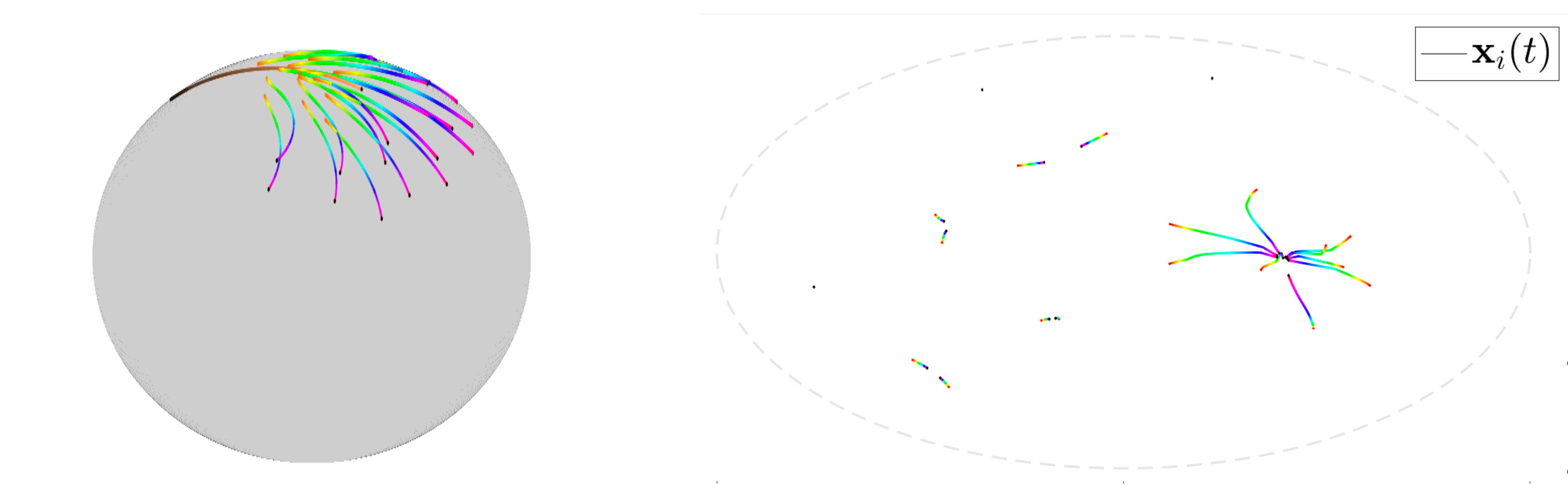}
    \caption{The two examples of systems \eqref{system} using code provided in \cite{Maggioni2021LearningIK}. The left models the swarming behaviour of preys and predators on $\mathbb{S}^2$ and the right models the aggregation behaviours of opinions on Poincare disk. \textcolor{black}{The color variations from orange to pink denotes the forward time evolution of particles.} }
    \label{fig:my_label}
\end{figure}

In their work \cite{Maggioni2021LearningIK}, the authors propose a least squares algorithm that uses the $\ell^2$ ODE residual \eqref{system} as the loss function and looks for an approximation of $\phi$ over piecewise polynomial spaces. This learning scheme exploits the low-dimensional structure of the problem, where the unknown function $\phi$ is a radial function defined on $\mathbb{R}^+$ regardless of the value of $N$. However, this approach leads to an inverse problem that requires solving for the identifiability of $\phi$, which is essential for analyzing the statistical optimality of least squares estimators.

In this paper, we approach the identifiability of $\phi$ in \eqref{system} from a linear statistical inverse learning problem perspective \cite{feng2021data}. That is, the observational  data comes in the form of positions of the $N$ particles on the manifold and the (possible noisy) first derivatives of these positions with respect to time, and they follow a joint probability distribution. To simplify notation, we write \eqref{system} as
\begin{equation}
\dot{\mathbf{X}}(t) = \rhsfo_{\phi}(\mathbf{X}(t))
\end{equation} where $\mathbf{X}=(\bx_1,\cdots,\bx_N)\in \mathcal{M}^N$ and $\rhsfo_{\intkernel}(\bX) \in T_{\mathbf{X}}\mathcal{M}^N$. We denote $\mathcal M^N=\mathcal M\times \cdots\times \mathcal M$ the product manifold consisting of $N$ copies of $\mathcal M$, with $T_{\bf X}\mathcal M^N$ the tangent space of $\mathcal M^N$ at ${\bf X}$. Let $A$ be a linear operator that maps a function $\varphi$ to $\rhsfo_{\varphi}$. Our study addresses two fundamental questions:
  \begin{itemize}
    \item[] \textbf{Question 1.}  What are  function spaces $\mathcal{H}$ and $\mathcal{F}$ such that  $A\in \mathcal{B}(\mathcal{H},\mathcal{F})$ (bounded linear operators mapping  $\mathcal{H}$ to $\mathcal{F}$)?
    
    \item[] \textbf{Question 2.} Suppose $A \in \mathcal{B}(\mathcal{H},\mathcal{F})$, when does the linear inverse problem 
    \begin{align}
    A\varphi = \rhsfo_{\varphi}
\end{align} have a stable solution?
    \end{itemize}

 We restrict our attention to $\mathcal{F}=L^2(\mathcal{M}^N;\mu;T_{\mathbf{X}}\mathcal{M}^N)$ where $\mu$ is the marginal distribution of positions  and $\mathcal{H}$ is a subspace of $L^2([0,R];\rho;\mathbb{R})$ with a suitable choice of measure $\rho$ constructed from $\mu$.  The use of $L^2$ spaces in both $\mathcal{H}$ and $\mathcal{F}$ is a natural choice when dealing with the $\ell^2$ loss function used in the learning. The space $\mathcal{F}$ is a space of square-integrable vector fields on $\mathcal{M}^N$ with respect to a probability measure $\mu$, which is the marginal distribution of observational position data. 
  The space $\mathcal{H}$ is a space of square-integrable functions supported on an interval $[0,R]$ so that the ODE system \eqref{system} is well-posed. For example, if $\mathcal{M}$ is compact, one can choose $R$ to be the injectivity radius of $\mathcal{M}$, see Section 2 in \cite{Maggioni2021LearningIK} and \cite{OpinionDynamicsongeneralcompactmanifold}. %
The answers to Question 1 and 2 have significant practical implications for designing statistical learning methods for predicting trajectories. In particular, the boundedness of $A$ means that the norm of $\mathcal{H}$ can be used as an effective error metric for estimators, allowing for the production of faithful approximations of the true velocity field. This is essential for accurate trajectory prediction. Additionally, the stability of the minimizer allows for the use of numerically stable procedures for obtaining faithful approximations of the minimizer from a finite amount of noisy data. Such results are crucial in establishing the statistical optimality of computation methods.  Our findings provide a positive answer to the conjectured geometric coercivity condition in \cite{Maggioni2021LearningIK} where the measure $\mu$ is the distribution of noise-free trajectory data with randomized initial distributions, but our framework can be applied  to any measure $\mu$ obtained in a more general observation regime, for example, taking the observation noise into account \cite{feng2021data}.

\paragraph{Related work and contribution}

Ordinary differential equations (ODEs) are a well-established tool for modeling dynamic processes with broad applications in various scientific fields. However, the explicit forms of these equations often remain elusive. The impressive advancement in data science and machine learning prompts researchers to devise methods originated from machine learning for data-driven discovery of dynamics. These examples include symbolic regression (e.g.\cite{schmidt2009distilling}), sparse regression/optimization (e.g.\cite{brunton2016discovering,schaeffer2018extracting,messenger2021weak}), kernel and Bayesian methods (\cite{raissi2017machine,heinonen2018learning,yang2021inference}), deep learning (e.g.\cite{qin2019data,du2022discovery}. The identifiability analysis is the first step in determining unknown parameters in ODE models. Previous work has mainly focused on parameter estimation. For instance, the identifiability analysis from single trajectory data for linear dynamical systems with linear parameters is performed in \cite{stanhope2014identifiability}, and later generalized to affine dynamical systems in \cite{duan2020identification}. For nonlinear ODEs arising from biomedical applications, one can refer to the survey in \cite{miao2011identifiability} for identifiability analysis methodologies for nonlinear ODE models and references therein.

Our study focuses on the application of data-driven modeling to complex particle systems on manifolds, and aims to conduct identifiability analysis motivated by the usage of nonparametric statistical learning methods. Specifically, we extend previous work on identifying interaction kernels in Euclidean spaces, as presented in \cite{lu2019nonparametric,lu2021learning,li2021identifiability}, to the more challenging manifold case. We explore how the geometry of the interacting domain affects the learning process. We adopt the newly developed  linear statistical inverse problem framework in \cite{feng2021data}, which allows for the analysis of data distributions from a broad range of observation regimes, in contrast to the previous work that only considered noise-free observations. Our work makes the following contributions:
\begin{itemize}
\item We find the solutions of \textbf{Question} 1 and \textbf{Question} 2 on  general manifolds, thereby providing a framework that includes the previous results as special cases.

\item Using our analysis, we refine the generic results on specific manifolds, such as the sphere and Hyperbolic space, providing a more nuanced understanding of the behavior of these systems.

\item Additionally, we investigate cases where the stability conditions may fail, deepening our understanding of the limitations and potential pitfalls of our approach.
\end{itemize}

Considering the manifold case has significant implications for various fields such as physics, biology, and social sciences, where systems can be modeled as interacting particles or agents on a manifold. Our approach provides a computational foundation for the data-driven discovery of these systems, leading to new insights and discoveries in these fields.

\section{Notation and Preliminaries}

\paragraph{Notation:} Let $\nu$ be a Borel positive measure on a  domain $\mathbb{D}_1$.  We use $L^2(\mathbb{D}_1;\nu;\mathbb{D}_2)$ to denote the set of $L^2(\nu)$-integrable vector-valued functions that map $\mathbb{D}_1$ to $\mathbb{D}_2$. 

Let $\mathcal{S}_1$ be  a  measurable subset of $\mathbb{R}^{D}$, then the restriction of the measure $\rho$ on $\mathcal{S}_1$, denoted by $\rho\mres \mathcal{S}_1$, is defined as $\rho\mres \mathcal{S}_1(\mathcal{S}_2)=\rho(\mathcal{S}_1 \cap \mathcal{S}_2)$ for any measurable subset $\mathcal{S}_2$ of $\mathbb{R}^{D}$. 
For two Borel positive measures $\rho_1, \rho_2$ defined on $\mathbb{R}^{D}$, $\rho_1$ is said to be absolutely continuous with respect to $\rho_2$, denoted by $\rho_1 \ll \rho_2$, if $\rho_1(\mathcal{S}) = 0$ for every set $\rho_2(\mathcal{S}) = 0$, $\mathcal{S} \subset \mathbb{R}^{D}$. $\rho_1$ and $\rho_2$ are called equivalent iff $\rho_1 \ll \rho_2$ and $\rho_2 \ll \rho_1$.

Let $\mathcal{H}_1, \mathcal{H}_2$ be  Hilbert spaces. We use $\langle\cdot,\cdot\rangle_{\mathcal{H}_1}$ to denote the inner product over $\mathcal{H}_1$, and still use $\langle\cdot,\cdot\rangle$ to denote the inner product on the Euclidean space.  We denote by $\mathcal{B}(\mathcal{H}_1,\mathcal{H}_2)$  the set of bounded linear operators mapping $\mathcal{H}_1$ to $\mathcal{H}_2$.
Let $A \in \mathcal{B}(\mathcal{H}_1,\mathcal{H}_2)$, we use $\|A\|$ to denote its operator norm.

\subsection{Preliminaries in Geometry}

We will use several geometric tools. For the convenience of the reader, we state them here. More details of these can be found in various books such as \cite{DoCarmo} and \cite{GallotRiemannian}.

We let $(\mathcal M,{g})$ denote a smooth, connected, and complete $n$-dimensional Riemannian manifold with Riemannian metric $g$. For any point $\bx \in \mathcal M,$ we let $T_{\bx}\mathcal M$ be the tangent space at $\bx$ and let 
\begin{align*}
\exp_{\bx}:T_{\bx}\mathcal M\rightarrow\mathcal M
\end{align*}denote the {\it exponential map}. If $\gamma_{\bx,{\bf v}}$ is the geodesic starting at $\bx$ with initial velocity ${\bf v}$, then $\exp_{\bx}({\bf v})=\gamma_{\bx,{\bf v}}(1)$. Let $I(\bx,\mathbf{v})>0$ be the first time when the geodesic $s\mapsto \exp_{\bx}(s\mathbf{v})$ stops to be minimizing. Define 
$$\mathcal D(\bx):=\{t\mathbf{v} \in T_{\bx} \mathcal M: \mathbf{v}\in \mathcal S^{n-1},\,\,0\leq t< I(\bx,\mathbf{v})\},$$
where $\mathcal S^{n-1}$ is the unit sphere in the tangent space with respect to the metric $g(\bx)$, then $\exp_{\bx}$ is a diffeomorphism from $\mathcal D(\bx)$ onto its image, and
\begin{align*}
    d(\bx,\mathbf{p})=|\exp_{\bx}^{-1}(\mathbf{p})|
\end{align*}
for any ${\bf p}\in \exp_{\bx}(\mathcal D(\bx))$. Recall that $d(\bx,{\bf p})$ is the Riemannian distance between $\bx$ and ${\bf p}$, which is obtained by minimizing the length of
all piecewise $C^1$ curves connecting $\bx$ and ${\bf p}$.
Let $\mathcal{CL}(\bx):=\exp_x(\partial \mathcal D(\bx))\subset \mathcal M$ be the {\it cut locus} of $\bx$, it is known that for connected complete Riemannian manifolds
$$\mathcal M=\exp_{\bx}(\mathcal D(\bx))\cup \mathcal{CL}(\bx)$$
for any $\bx\in\mathcal M$, and $\mathcal{CL}(\bx)$ has zero measure, see, e.g. [\cite{GallotRiemannian}, page 158, Corolarry 3.77]. 
We denote $\textup{Inj}(\bx):=d(\bx,\mathcal{CL}(\bx))>0$ to be the {\it injectivity radius} at $\bx\in \mathcal M$ (note that if $\mathcal{CL}(\bx)=\emptyset$, $\textup{Inj}(\bx)=\infty$), and let $\textup{Inj}(\mathcal{M})=\inf_{\bx\in \mathcal M}\textup{Inj}(\bx)$ be the injectivity radius of $\mathcal M$. {If $\mathcal M$ is compact, then $\textup{Inj}(\mathcal M)$ is strictly positive.}
 
It is easy to check that $\mathcal D(\bx)$ is an open star-shaped subset of $T_{\bx}\mathcal M$, so $(\mathcal D(\bx), \exp_{\bx})$
is a smooth chart and one has that for any Borel measurable function $f$ on $\mathcal M$ 
\begin{align}\label{integral on M}
    \int_{\mathcal M}f\,dV_\mathcal{M}=\int_{\mathcal D(\bx)} (f\circ \exp_{\bx})\sqrt{\det (G_{\bx})}\,d\lambda^n ,
\end{align}
where $dV_\mathcal{M}$ is the Riemannian measure on the manifold $(\mathcal M,g)$, $G_{\bx}:=\exp_{\bx}^* g$ is the pullback of $g$ by $\exp_{\bx}$, and $\lambda^n$ denotes the $n$ dimensional Lebesgue measure on $\mathbb R^n.$ Notice that given coordinates $\{u^1,\cdots, u^n\}$ of $\mathcal D(\bx)$, for any $u\in\mathcal D(\bx)$
$$(\exp^*_{\bx}g)_u(\frac{\partial}{\partial u^i},\frac{\partial}{\partial u^j})=g_{\exp_{\bx}(u)}(d\exp_{\bx}\big|_u(\frac{\partial}{\partial u^i}),d\exp_{\bx}\big|_u(\frac{\partial}{\partial u^j})),$$
where $d\exp_{\bx}\big|_u:T_u\mathcal D(\bx)\to T_{\exp_{\bx}(u)}\mathcal M$ is the differential of the exponential map.

As an explicit example to illustrate these concepts, let us consider the sphere $\mathcal M=\mathbb S^n\subset \mathbb R^{n+1},$ where one finds that   $$T_{\bx}\mathbb S^n=\{\mathbf{v}\in \mathbb R^{n+1}: \langle \bx,\mathbf{v}\rangle =0\}=:\bx^\perp,$$ where $\langle\cdot ,\cdot \rangle $ denotes the standard inner product on $\mathbb R^{n+1}.$ We choose the round metric on $\mathbb S^n$ which is the pullback of the Euclidean metric of $\mathbb R^{n+1}.$ In this metric, one can compute the distance between two points as follows for any $\bx, \mathbf y\in \mathbb S^n\subset \mathbb R^{n+1},$ we have 
\begin{align}\label{distance-on-sphere}
    d_{\mathbb{S}^n}(\mathbf{x},\mathbf{y})=\arccos(\langle \bx,\mathbf y\rangle).
\end{align}Note that for this metric the exponential chart can be computed explicitly, being 
\begin{align}\label{exponentialmaponsphere}
\exp_{\bx}(\mathbf{v})=\cos (\|\mathbf{v}\|)\mathbf{x}+\sin(\|\mathbf{v}\|)\frac{\mathbf{v}}{\|\mathbf{v}\|}\quad \textup{for }\mathbf{v}\in T_{\bx}\mathbb S^n=\bx^\perp.
\end{align}

The cut domain is given by $\mathcal D(\bx)=B_\pi(\mathbf{0})$ for any $\bx \in \mathbb S^n,$ i.e., the ball of radius $\pi$ centered at origin. Writing the metric in polar coordinates $g_{\mathbb S^n}= dr^2+\sin^2(r)g_{\mathbb S^{n-1}}$, we thus we get for any measurable $f$ on $\mathbb S^n$ that 
\begin{align}\label{integralonsphere}
    \int_{\mathbb S^n}f\,dV_{\mathbb{S}^n}=\int_0^\pi\int_{\mathbb S^{n-1}}f(\exp_{\mathbf{x}}(t\mathbf{v})){\sin^{n-1}(t)}\, d\mathcal \sigma(\mathbf{v})\,dt,
\end{align}
where $\mathbb S^{n-1}$ denotes the $n-1$ dimensional unit sphere and $d\sigma $ denotes the induced surface measure.

The other simple case is the hyperbolic space, $\mathbb H^n=\{\mathbf{x} =(x_1,\cdots, x_n) 
\in \mathbb R^n: x_n>0\}$ with the usual metric $g_{\mathbb H^n}=\frac{1}{x_n^2}g_{\textrm{Euclid}}.$ By the Hadamard theorem, we have that $\mathcal{CL}(\mathbf{p})=\emptyset$ for any $\mathbf{p} \in \mathbb H^n,$ and that $\mathcal D(\mathbf{p})=\mathbb R^n.$ The equation \eqref{integralonsphere}  for $\mathbb{H}^n$ is the same except one has to change the area of integration and  replaces $\sin$ by $\sinh:$
\begin{align}\label{integralonhyperbolic}
    \int_{\mathbb H^n}f\,dV_{\mathbb{H}^n}=\int_0^\infty\int_{\mathbb S^{n-1}}f(\exp_{\mathbf{x}}(t\mathbf{v})){\sinh^{n-1}(t)}\, d\mathcal \sigma(\mathbf{v})\,dt.
\end{align}

\subsection{Interactions on manifold} \label{sec:pre:manifold}
Following the work in  \cite{Maggioni2021LearningIK,OpinionDynamicsongeneralcompactmanifold},
we choose  $d(\bx_i, \bx_j)$ to be the  geodesic distance of any two points $\bx_i,\bx_j\in \mathcal M$. 
Notice that we can connect $\bx_i$ and $\bx_j$ with a minimizing geodesic which is unique if they are not cut points of each other. In view of that we define the influence vector as  
\begin{align}\label{ApproachB}
    w(\bx_i,\bx_j)=\begin{cases} d(\bx_i,\bx_j) \dot \gamma_{\bx_i,\bx_j}(0)\quad \textup{if } \bx_i \neq \bx_j, \textup{ and }\bx_i \notin \mathcal{CL}({\bx_j})\\
    0\quad \textup{otherwise,}
    \end{cases}
\end{align}
where $\gamma_{\bx_i,\bx_j}:[0,d(\bx_i,\bx_j)]\rightarrow \mathcal{M}$ denotes the unique normalized minimizing geodesic connecting $\bx_i$ and $\bx_j.$ So $\dot{\gamma}_{\bx_i,\bx_j}(0)$ is the unit tangent vector to the geodesic $\gamma_{\bx_i,\bx_j}$ at $\bx_i$. In particular, $w(\bx,\by)=\exp_{\bx}^{-1}(\by)$ for $\by\notin \mathcal{CL}(\bx)$.

\begin{remark} 

\begin{itemize} We add several comments on the first-order models. 

\item It is sometimes conventional to consider the unit norm influence vector by moving the term $d(\bx_i,\bx_j)$ in  \eqref{ApproachB} to the interaction part, so the interaction kernel becomes $\phi(\cdot)\cdot$. The analysis developed in this paper can be equivalently applied to this case as well by slight modification on the function spaces.

\item When $\mathcal{M}=\mathbb{R}^n$ or $\mathbb{S}^n$, we can calculate the influence vector as follows
\begin{align*}
    w(\bx_i,\bx_j)=\begin{cases}{\Pi_{T_{\bx_i}\mathcal M}(\bx_j-\bx_i)}\quad \textup{if } \Pi_{T_{\bx_i}\mathcal M}(\bx_j-\bx_i)\neq 0\\
    0\quad \textup{otherwise}.
    \end{cases}
\end{align*} where $\Pi_{T_{\mathbf{x}_i}\mathcal M} $ denotes the orthogonal projection onto the tangent space $T_{\bx_i}\mathcal M$. In particular, when $\mathcal{M} = \mathbb{R}^n$, 
one has that the vectors of influence are simply
\begin{align*}
w(\bx_i,\bx_j)=\bx_j-\bx_i. 
\end{align*} This recovers the classical first order models from \cite{motsch2014heterophilious}.
\item In \cite{OpinionDynamicsongeneralcompactmanifold}, the well-posedness of \eqref{system} is examined for compact manifolds. The well-posedness for the mean-field equation of \eqref{system} is examined in \cite{fetecau2018self} on two dimensional sphere and two dimensional hyperbolic spaces. It is established that if the support of the interaction kernel is smaller than the injective radius, then \eqref{system} is well-posed. However, it is worth noting that our analysis does not necessitate the manifold being compact.
\end{itemize}
\end{remark}

We let $\mathcal{M}^N = 
\mathcal{M}\times\mathcal{M}\times\cdots \times\mathcal{M}$ be the canonical product of Riemannian manifolds with product Riemannian metric given by $g_{\mathcal{M}}^N$
. Denote by
\begin{align}
    \bX: = \begin{bmatrix}
    \vdots\\\bx_i \\ \vdots
    \end{bmatrix} \in \mathcal{M}^N, 
\end{align} the tangent space at $\bX$  is defined by $T_{\mathbf{X}}\mathcal{M}^N =T_{\bx_1}\mathcal M\times\cdots\times T_{\bx_N}\mathcal M$ such that 
$$\bigg\langle \begin{bmatrix}
    \vdots\\\mathbf{u}_i \\ \vdots
    \end{bmatrix}, \begin{bmatrix}\vdots\\\mathbf{v}_i \\ \vdots \end{bmatrix}\bigg\rangle_{T_{\mathbf{X}}\mathcal{M}^N}:= \frac{1}{N}\sum_{i=1}^{N}g\left( \mathbf{u}_i,\mathbf{v}_i\right)$$
which is the weighted product metric induced by the metric $g$.

\subsection{Preliminaries on probability}

We say that $N$ random vectors $X_1,\cdots,X_N$ are \textbf{conditional i.i.d} if there exists a random vector $\theta \sim \pi(\theta)$ such that $X_1|\theta,\cdots, X_N|\theta$ are independent and identically distributed.  Then
\[p(X_1,\cdots,X_N)=\int p(X_1|\theta)p(X_2|\theta)\cdots p(X_N|\theta)\pi(\theta)d\theta,\]
It is also called a \textbf{mixture} of i.i.d. For example,  $X_1,\cdots,X_N$ is conditional i.i.d when they are from an infinite exchangeable sequence, which is a consequence of  the celebrated de Finetti's representation theorem. It also includes $N$ i.i.d random vectors as a special example. 

\textcolor{black}{\textbf{Examples of $\mu$:} In this paper, we assume that the observational data, consisting of pairs of positions and velocities, follows a probability distribution, and $\mu$ represents the marginalization of this joint distribution with respect to the position data. In \cite{Maggioni2021LearningIK, lu2019nonparametric, lu2021learning, feng2021data}, the authors explore the scenario involving multiple i.i.d.\ trajectory data, with initial positions sampled i.i.d.\ from a probability measure $\gamma$. In this framework, if observations occur only at $t=0$ (e.g., see Theorem 9 in \cite{lu2021learning}), then $\mu$ can be equated to $\gamma$. On the other hand, in scenarios where one observes the steady state of particle dynamics, such as configurations constituting vertices of regular complexes often seen in empirical phenomena (see an example in Section \ref{singularexample}), $\mu$ could become discrete.
 }

\section{On the Boundedness of \textit{A}}
We begin by defining a measure $\rho$ on the Borel $\sigma$-algebra of $\mathbb{R}^{+}$, which addresses Question 1. Throughout the paper, if not specified otherwise, $\mu$  defined on $\mathcal{M}^N$ is assumed to be absolutely continues with respect to the volume form. 
\begin{definition}\label{themeasure}
Let $\mathbf{X}=(\bx_1,\cdots,\bx_N) \in \mathcal{M}^N$ and let $B\subset \mathbb{R}^{+}$ be a Borel set. Then, we define the positive measure $\rho$ as follows:
\begin{align*}
\rho(B)
=\frac{1}{N(N-1)}\sum_{i \neq j}\int_{P^{-1}_{ij}(B)}P^2_{ij}(\mathbf{X}) d\mu(\mathbf{X}),
\end{align*}
where the function $P_{ij}(\mathbf{X}):= d(\bx_i,\bx_j)$ maps $\mathcal{M}^N$ to $\mathbb{R}^+$. In other words, $\rho(B)$ is the average of measures that are push-forwards of $\mu$ by the distance map $P_{ij}^2$. \end{definition}

 \textcolor{black}{The measure $\rho$ can be conceptualized as an ``occupancy" measure. More specifically, for any interval $I$, $\rho(I)$ quantifies the likelihood of observing a pair of agents whose distance apart falls within $I$. This measure is integral to understanding the dynamics within highly explored regions—areas where agent interactions are frequent and thus provide a richer dataset for estimating the interaction kernel with greater accuracy.} In a nutshell, the measure $\rho$ serves to capture the structural information of the governing equations \eqref{system} by quantifying the extent to which regions of $\mathbb{R}^+$ are explored through samples from $\rhsfo_{\phi}(\bX)$ at the population level.

\textcolor{black}{Given a density function $\mu$, directly deriving an analytical expression for $\rho$ can be challenging. A simple case is when $\mathcal{M}=\mathbb{R}$ and $\mu$ is the uniform distribution on $[0,1]^N$, then  $\rho$ is compactly supported on $[0,1]$ with density function $2(1-t)$ for $0\leq t\leq 1$. One can also refer to a derivation of $\rho$ in the Euclidean case when $\mu$ is Gaussian (see Theorem 9 in \cite{lu2021learning}). In general, we can employ Monte Carlo simulations and density estimation techniques to approximate $\rho$. We have elaborated on this methodological approach and provided an illustrative example in Figure \ref{fig:my_label_2}, which demonstrates the practical application of these techniques in estimating $\rho$ based on specified $\mu$ values.}

By virtue of being the average of measures that are push-forwards of $\mu$ by the distance map $P_{ij}^2$, $\rho$ inherits some properties of $\mu$. For instance, if $\mu$ is a Borel regular measure on $\mathcal{M}^N$, then $\rho$ is also Borel regular on $\mathbb{R}^+$. Given our assumption that $\mu$ is absolutely continuous with respect to the volume measure $V_{\mathcal{M}^N}$ of $\mathcal{M}^N$, it follows that $\rho$ enjoys the same property, as succinctly outlined below:

\begin{lemma}\label{Lemma21}
 The measure $\rho$ in Definition \ref{themeasure} is absolute continuous with respect to the Lebesgue measure of $\mathbb R_+$. 
\end{lemma}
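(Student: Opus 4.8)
The plan is to reduce the statement to a one–variable fact about the distance function and then transfer it back through the structure of $\rho$. Since $\rho$ is a finite nonnegative combination (over the pairs $i\neq j$) of the measures $\nu_{ij}(B):=\int_{P_{ij}^{-1}(B)}P_{ij}^2\,d\mu$, and each $\nu_{ij}$ is nonnegative, it suffices to show $\nu_{ij}(B)=0$ whenever $\lambda(B)=0$, where $\lambda$ denotes Lebesgue measure on $\mathbb{R}_+$. Because $P_{ij}^2\ge 0$ and the integral of a nonnegative function over a $\mu$-null set vanishes, this in turn reduces to proving that $\mu\big(P_{ij}^{-1}(B)\big)=0$ for every Lebesgue-null $B$; that is, that the push-forward $(P_{ij})_\ast\mu$ is absolutely continuous with respect to $\lambda$. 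Invoking the standing hypothesis $\mu\ll V_{\mathcal M^N}$, it is then enough to establish $V_{\mathcal M^N}\big(P_{ij}^{-1}(B)\big)=0$. (Measurability of $P_{ij}$ is immediate, since $d$ is continuous.)

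Next I would localize to the two relevant coordinates. Since $P_{ij}(\mathbf X)=d(\bx_i,\bx_j)$ depends only on $\bx_i,\bx_j$ and $V_{\mathcal M^N}$ is the product of the coordinate volume measures, Tonelli's theorem lets me integrate out all coordinates except $\bx_j$ first:
\[
V_{\mathcal M^N}\big(P_{ij}^{-1}(B)\big)=\int_{\mathcal M^{N-1}}\Big(\int_{\mathcal M}\mathbf 1_B\big(d(\bx_i,\bx_j)\big)\,dV_{\mathcal M}(\bx_j)\Big)\prod_{k\neq j}dV_{\mathcal M}(\bx_k).
\]
This arrangement sidesteps any $\infty\cdot 0$ difficulty arising from non-compactness of $\mathcal M$, because it suffices to show that the inner integral vanishes for each fixed $\bx_i$. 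Writing $h_{\bx}(\by):=d(\bx,\by)$, the inner integral is exactly $(h_{\bx_i})_\ast V_{\mathcal M}(B)$, so the whole problem collapses to the single claim: for each fixed $\bx\in\mathcal M$, the push-forward $(h_{\bx})_\ast V_{\mathcal M}$ is absolutely continuous with respect to $\lambda$.

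To prove this claim I would use the exponential chart. The cut locus $\mathcal{CL}(\bx)$ has zero volume, so $\mathcal M$ may be replaced by $\exp_{\bx}(\mathcal D(\bx))$ up to a null set, and on this image $d(\bx,\exp_\bx(u))=|u|$. Applying the integration formula \eqref{integral on M} and passing to polar coordinates $u=t\mathbf v$ with $t\ge 0$, $\mathbf v\in\mathcal S^{n-1}$, gives
\[
(h_{\bx})_\ast V_{\mathcal M}(B)=\int_{\mathcal D(\bx)}\mathbf 1_B(|u|)\sqrt{\det G_{\bx}(u)}\,d\lambda^n(u)=\int_0^\infty \mathbf 1_B(t)\,q_{\bx}(t)\,dt,
\]
where $q_{\bx}(t):=t^{n-1}\int_{\{\mathbf v:\,t\mathbf v\in\mathcal D(\bx)\}}\sqrt{\det G_{\bx}(t\mathbf v)}\,d\sigma(\mathbf v)$ is a nonnegative measurable function. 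Hence $(h_{\bx})_\ast V_{\mathcal M}$ admits the Lebesgue density $q_{\bx}$, and in particular $(h_\bx)_\ast V_{\mathcal M}(B)=0$ whenever $\lambda(B)=0$. Unwinding the three reductions then yields $\rho(B)=0$, which is the assertion.

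The only genuinely delicate point is this last step—controlling the distance push-forward through the exponential chart—and within it the need to discard the cut locus (legitimate since it has measure zero, as recalled after \eqref{integral on M}) and to justify the polar-coordinate rewriting that exhibits the explicit density $q_\bx$. Everything else is Tonelli bookkeeping together with the hypothesis $\mu\ll V_{\mathcal M^N}$; the non-compact case requires only the mild care of integrating the innermost variable first, as indicated above.
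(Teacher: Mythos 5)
Your proof is correct, but it takes a genuinely different route from the paper's. The paper works directly on $\mathcal{M}^N$: it squares the null set, observes that $P_{ij}^2$ is smooth with critical set $\{\bx_i=\bx_j\}$ of measure zero, and invokes the fact that a real-valued map whose gradient vanishes only on a null set pulls Lebesgue-null sets back to null sets (an implicit-function-theorem/coarea-type argument). You instead reduce, via Tonelli on the product measure, to a one-variable statement: for fixed $\bx$, the push-forward of $V_{\mathcal M}$ under $\by\mapsto d(\bx,\by)$ has a Lebesgue density, which you exhibit explicitly by discarding the cut locus and passing to polar coordinates in the exponential chart. What your approach buys: it needs no smoothness of the (squared) distance function, only the integration formula \eqref{integral on M}; it handles the cut locus explicitly, which is a point the paper glosses over (the squared distance is in fact not smooth at cut pairs, so the paper's smoothness claim itself needs the same cut-locus caveat you make explicit); and it produces an explicit density $q_{\bx}$ for the radial push-forward, which is of independent interest for describing $\rho$. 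What the paper's approach buys: brevity, and no reliance on the product structure of the volume measure --- your Tonelli step tacitly uses that $V_{\mathcal{M}^N}$ is (a constant multiple of) the product of the factor volume measures, true for the weighted product metric used here, but worth a sentence. Both arguments share the same initial reduction to showing $V_{\mathcal{M}^N}\bigl(P_{ij}^{-1}(B)\bigr)=0$ for Lebesgue-null Borel sets $B$.
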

\begin{proof} It suffices to show that for any $i,j,$ the measure $\rho_{ij}$ given by \begin{align*}
    \rho_{ij}(A)=\int_{P_{ij}^{-1}(A)} P^2_{ij}(\boldsymbol{X})\, d\mu(\boldsymbol{X})
\end{align*}
is absolutely continuous with respect to $|\cdot|, $ the one dimensional Lebesgue measure. 
 Let $A\subset \mathbb R^+$ satisfying $|A|=0$. We denote $A^2=\{x^2: x\in A\}$, then $|A^2|=0$ too. Notice that $P_{ij}^2: \mathcal M^N\to \mathbb R^+$ is smooth, and the set of critical points (i.e. with zero gradient) of $P^2_{ij}$ is 
 $$\{{\bf X}=(\bx_1,\cdots,\bx_N)\in \mathcal M^N: \bx_i=\bx_j\},$$
which is of zero measure, therefore $V_{\mathcal{M}^N}((P^2_{ij})^{-1}(A^2))=0$ whenever $|A^2|=0$. It follows that $V_{\mathcal{M}^N}(P^{-1}_{ij}(A))=V_{\mathcal{M}^N}((P^2_{ij})^{-1}(A^2))=0$. Since $\mu$ is absolutely continuous with respect to $V_{\mathcal{M}^N}$, we obtain that $\mu(P^{-1}_{ij}(A))=0$, thus $\rho_{ij}(A)=0$. This shows that $\rho$ is absolute continuous with respect to the Lebesgue measure $|\cdot|$ on $\mathbb R^+$.
\end{proof}

In summary, the measure $\rho$ is derived from $\mu$ and shares numerous properties with it, rendering it a valuable instrument in analyzing the operator $A$ and characterizing the function space $\mathcal{H}$. With this groundwork laid, we are prepared to present the key outcome of this section.

\begin{proposition}\label{lem1}
Let $\mathcal{H}$ be  a subspace of $L^2([0,R];\rho\mres[0,R];\mathbb{R})$ or itself. Then $A \in \mathcal{B}(\mathcal{H},\mathcal{F})$ and $\|A\|\leq \frac{N-1}{N}.$
\end{proposition}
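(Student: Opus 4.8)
The plan is to estimate the $\mathcal{F}$-norm of $A\varphi=\rhsfo_\varphi$ directly and then identify the result with the $\mathcal{H}$-norm of $\varphi$ via the change of variables built into the definition of $\rho$. First I would unwind the norm on $\mathcal{F}$ using the weighted product metric on $T_{\bX}\mathcal{M}^N$, writing
\[
\|\rhsfo_\varphi\|_{\mathcal{F}}^2 = \int_{\mathcal{M}^N} \frac{1}{N}\sum_{i=1}^N \left| \frac{1}{N}\sum_{j\neq i}\varphi(d(\bx_i,\bx_j))\, w(\bx_i,\bx_j)\right|^2 d\mu(\bX).
\]
Here I use that the diagonal terms $j=i$ drop out (since $w(\bx,\bx)=0$) and, crucially, that $|w(\bx_i,\bx_j)| = d(\bx_i,\bx_j) = P_{ij}(\bX)$, because by \eqref{ApproachB} the influence vector is the distance times a unit tangent vector.

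The next step is a Cauchy--Schwarz estimate on each inner sum. Applying the triangle inequality followed by Cauchy--Schwarz over the $N-1$ indices $j\neq i$ gives
\[
\left| \sum_{j\neq i}\varphi(P_{ij})\, w(\bx_i,\bx_j)\right|^2 \leq (N-1)\sum_{j\neq i}\varphi(P_{ij})^2\, P_{ij}^2(\bX),
\]
where the factor $N-1$ is the source of the sharp constant. Inserting this, dividing by $N^2$, and summing $\frac1N\sum_i$ collapses the double index into a single sum over ordered pairs, yielding $\|\rhsfo_\varphi\|_{\mathcal{F}}^2 \leq \frac{N-1}{N^3}\sum_{i\neq j}\int_{\mathcal{M}^N}\varphi(P_{ij})^2 P_{ij}^2 \, d\mu$.

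Finally I would recognize the right-hand side through Definition \ref{themeasure}. Since $\rho$ is exactly the average of the pushforwards of the weighted measures $P_{ij}^2\, d\mu$ under the distance maps $P_{ij}$, the standard pushforward identity gives $\sum_{i\neq j}\int_{\mathcal{M}^N}\varphi(P_{ij})^2 P_{ij}^2\, d\mu = N(N-1)\int_{\mathbb{R}^+}\varphi^2\, d\rho$. Because $\varphi\in\mathcal{H}$ vanishes outside $[0,R]$, this integral equals $\int_{[0,R]}\varphi^2\, d(\rho\mres[0,R]) = \|\varphi\|_{\mathcal{H}}^2$. Collecting the constants yields $\|A\varphi\|_{\mathcal{F}}^2 \leq \frac{(N-1)^2}{N^2}\|\varphi\|_{\mathcal{H}}^2$, hence $A\in\mathcal{B}(\mathcal{H},\mathcal{F})$ with $\|A\|\leq\frac{N-1}{N}$.

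I expect no genuine obstacle, but the conceptual heart of the argument is recognizing that the weight $P_{ij}^2$ deliberately placed inside $\rho$ is precisely what absorbs the factor $|w(\bx_i,\bx_j)|^2 = P_{ij}^2$ produced by the influence vector, so the estimate closes with no residual geometric term. The only point requiring care is the bookkeeping between $\rho$ on $\mathbb{R}^+$ and its restriction $\rho\mres[0,R]$; this is harmless since $\varphi(P_{ij})$ is supported on $\{P_{ij}\leq R\}$, so contributions from larger distances vanish automatically.
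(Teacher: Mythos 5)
Your proof is correct and follows essentially the same route as the paper's: unwind the $L^2(\mu)$-norm using the weighted product metric, bound each inner sum by $(N-1)\sum_{j\neq i}\varphi(P_{ij})^2P_{ij}^2$ (your triangle-plus-Cauchy--Schwarz step is just a repackaging of the paper's expansion of $g$ followed by Cauchy--Schwarz and Young's inequality), and then recognize the resulting double sum as $N(N-1)\|\varphi\|_{L^2(\rho)}^2$ via Definition \ref{themeasure}. Your closing remark about the weight $P_{ij}^2$ in $\rho$ absorbing $|w(\bx_i,\bx_j)|^2$, and the support bookkeeping on $[0,R]$, are exactly the points the paper's proof relies on (the latter handled slightly more explicitly by you).
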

\begin{proof} It suffices to prove this result when $\mathcal{H}:=L^2([0,R];\rho\mres[0,R];\mathbb{R})$.
Observe that for any $\varphi \in \mathcal{H},$ one has that 
\begin{align*}
    \|\rhsfo_\varphi \|^2_{L^2(\mu)}=\frac{1}{N}\sum_{i=1}^N\int_{\mathcal M^N}\|\frac{1}{N}\sum_{j\neq i}\varphi(d(\bx_{i},\bx_j))\omega(\bx_i,\bx_j)\|^2\, d\mu(\boldsymbol X).
\end{align*}

We now estimate the integrand using Cauchy-Schwartz and Young's inequality: for any $i=1,\dots , N$
\begin{align*}
    \|\frac{1}{N}\sum_{j\neq i}\varphi(d(\bx_i,\bx_j))\omega(\bx_i,\bx_j)\|^2
    &=g\Bigl(\frac{1}{N}\sum_{j\neq i}\varphi(d(\bx_{i},\bx_j))\omega(\bx_i,\bx_j),\frac{1}{N}\sum_{k\neq i}\varphi(d(\bx_{i},\bx_k))\omega(\bx_i,\bx_k)\Bigr)\\
    &=\frac{1}{N^2}\sum_{j\neq i}\sum_{k\neq i}\varphi(d(\bx_{i},\bx_j))\varphi(d(\bx_{i},\bx_k))g\left(\omega(\bx_i,\bx_j),\omega(\bx_i,\bx_k)\right) \\
    &\leq \frac{1}{N^2}\sum_{j\neq i}\sum_{k\neq i}\big|\varphi(d(\bx_{i},\bx_j))\varphi(d(\bx_{i},\bx_k))\big|d(\bx_{i},\bx_j)d(\bx_{i},\bx_k)\\
    &\leq \frac{1}{2N^2}\sum_{j\neq i}\sum_{k\neq i}\varphi^2(d(\bx_{i},\bx_j))d^2(\bx_i,\bx_j)+\varphi^2(d(\bx_{i},\bx_k))d^2(\bx_i,\bx_k)\\
    &=\frac{N-1}{N^2}\sum_{j\neq i}\varphi^2(d(\bx_{i},\bx_j))d^2(\bx_i,\bx_j).
\end{align*}
From this estimate we get that 
\begin{align*}
    \|\rhsfo_\varphi \|^2_{L^2(\mu)}\leq \frac{N-1}{N^3}\sum_{i\neq j}\int_{\mathcal M^N}\varphi^2(d(\bx_{i},\bx_j))d^2(\bx_i,\bx_j)\, d\mu(\boldsymbol X)
\end{align*}
Moreover, one has that 
\begin{align*}
    \|\varphi\|^2_{L^2(\rho)}=\frac{1}{N(N-1)}\sum_{i\neq j}\int_{\mathcal M^N}\varphi^2(d(\bx_{i},\bx_j))d^2(\bx_i,\bx_j)\, d\mu(\boldsymbol X),
\end{align*}
from which we get that 
\begin{align*}
     \|\rhsfo_\varphi \|^2_{L^2(\mu)} = \|A\varphi \|^2_{L^2(\mu)}\leq \frac{(N-1)^2}{N^2} \|\varphi\|^2_{L^2(\rho)},
\end{align*}
as desired.
\end{proof}

The above theorem establishes that the $L^2$ norm induced by $\rho$ is a potent metric for data-driven estimators. Estimators that yield small $L^2(\rho)$ errors provide accurate approximations of the velocity field, thereby guaranteeing reliable trajectory prediction outcomes. Notably, the $L^2(\rho)$ norm has been utilized as an evaluation metric for the generalization error of least square estimators in \cite{Maggioni2021LearningIK}.

\section{Well-posedness}

In this section, we present stability results for a specific family of measures, namely the joint distribution of $N$ independent and identically distributed random variables on $\mathcal M$. In the forward problem, it is often assumed that the initial conditions of particles are i.i.d. because these particles are indistinguishable. Under certain conditions, such as the absence of significant interactions between the particles and $N$ is sufficiently large, the system may remain approximately i.i.d. over time. In our analysis, we leverage the i.i.d assumption to facilitate computation and prove that the associated integral operator is positive and even strictly positive. We later demonstrate that the stability results hold more generally for the joint distribution of $N$ conditionally i.i.d. random variables by leveraging De Finetti's representation theorem. Finally, we show that the stability results hold for their equivalent measures in Corollary \ref{cor}.

\begin{proposition}\label{mainresult}
 Suppose that $\mu $ is the joint distribution of  $N$ i.i.d random variables. Then we have that $\forall \varphi\in \mathcal{H} =L^2([0,R];\rho\mres[0,R];\mathbb{R})$
\begin{align*}
  \|A\varphi\|_{L^2(\mu)} \geq \frac{\sqrt{N-1}}{N}\|\varphi\|_{L^2(\rho)}
\end{align*}
\end{proposition}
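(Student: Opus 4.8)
The plan is to compute $\|A\varphi\|_{L^2(\mu)}^2$ exactly and to exhibit it as the sum of a ``diagonal'' piece that already reproduces the desired lower bound and a ``cross'' piece that turns out to be nonnegative. Expanding the squared norm of $\rhsfo_\varphi$ pointwise, exactly as in the proof of Proposition \ref{lem1} but now keeping the bilinear form of $g$ intact, gives
\begin{align*}
\|A\varphi\|_{L^2(\mu)}^2 = \frac{1}{N^3}\sum_{i=1}^N\sum_{j\neq i}\sum_{k\neq i}\int_{\mathcal M^N}\varphi(d(\bx_i,\bx_j))\varphi(d(\bx_i,\bx_k))\,g\big(w(\bx_i,\bx_j),w(\bx_i,\bx_k)\big)\,d\mu(\bX).
\end{align*}
The first step is to separate the terms with $j=k$ from those with $j\neq k$.

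For the diagonal terms $j=k$, using $\|w(\bx_i,\bx_j)\|^2 = d^2(\bx_i,\bx_j)$ and comparing with the formula for $\|\varphi\|_{L^2(\rho)}^2$ recalled in the previous section, this part evaluates \emph{exactly} to $\frac{N-1}{N^2}\|\varphi\|_{L^2(\rho)}^2$, whose square root is precisely the target constant $\frac{\sqrt{N-1}}{N}$. Hence the proposition reduces to showing that the off-diagonal sum (over distinct $j\neq k$, both different from $i$) is nonnegative.

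This is where the i.i.d. hypothesis enters, and it is the heart of the argument. Writing $d\mu(\bX)=\prod_{l=1}^N d\mu_0(\bx_l)$ for the common single-particle law $\mu_0$, I note that for a fixed triple of distinct indices $i,j,k$ the integrand depends only on $\bx_i,\bx_j,\bx_k$, so integrating out the remaining $N-3$ coordinates leaves a triple integral. Freezing $\bx_i=\bx$ and using the independence of $\bx_j$ and $\bx_k$ lets me factor the $\by,\bz$-integral through the bilinear form $g$ on the single tangent space $T_\bx\mathcal M$, since both $w(\bx,\by)$ and $w(\bx,\bz)$ lie there. Each such term then collapses to a perfect square,
\begin{align*}
\int_{\mathcal M}\Big\| \int_{\mathcal M}\varphi(d(\bx,\by))\,w(\bx,\by)\,d\mu_0(\by)\Big\|^2 d\mu_0(\bx)\ \ge\ 0.
\end{align*}
Summing over all $i$ and all distinct pairs $j\neq k$ shows the cross part is nonnegative; combining with the diagonal computation yields $\|A\varphi\|_{L^2(\mu)}^2 \ge \frac{N-1}{N^2}\|\varphi\|_{L^2(\rho)}^2$, which is the claim.

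I expect the main obstacle to be the factorization step: one must verify carefully that the two influence vectors attached to the common base point $\bx_i$ genuinely lie in the same tangent space $T_{\bx_i}\mathcal M$, so that the cross term is a bona fide inner product that collapses to a squared norm under independence. The measure-zero set on which $w$ is set to $0$ (cut locus and coincident points) should cause no trouble, since by Lemma \ref{Lemma21} it does not affect any of the integrals.
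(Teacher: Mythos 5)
Your proof is correct, and it shares the paper's overall skeleton: the same expansion of $\|A\varphi\|_{L^2(\mu)}^2$ into diagonal terms (which give exactly $\tfrac{N-1}{N^2}\|\varphi\|_{L^2(\rho)}^2$) and cross terms $C_{ijk}$, the same use of the i.i.d.\ structure to reduce to a single triple $(\bx,\by,\bz)$, and the same conditioning on $\bx$. Where you diverge is in the key nonnegativity step. The paper proves $C_{ijk}\geq 0$ by invoking positive-definite kernel theory (its Theorem \ref{t52}): for fixed $\bx$, both $\varphi(d(\bx,\by))\varphi(d(\bx,\bz))$ and $g(w(\bx,\by),w(\bx,\bz))$ are positive-definite kernels in $(\by,\bz)$, their product is positive definite, and the integral of a positive-definite kernel against a product measure is nonnegative. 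You instead exploit independence directly: by bilinearity of $g$ on the fixed finite-dimensional inner-product space $T_{\bx}\mathcal M$ (and Fubini, justified since $\varphi\in L^2(\rho)$ gives the needed integrability for a.e.\ $\bx$), the conditional expectation factors as $g\bigl(V(\bx),V(\bx)\bigr)=\|V(\bx)\|_g^2$ with $V(\bx)=\int_{\mathcal M}\varphi(d(\bx,\by))\,w(\bx,\by)\,d\mu_0(\by)$, because $\by$ and $\bz$ are independent with the same law. Your route is more elementary and self-contained — it removes the dependence on the appendix on positive-definite kernels — and it is essentially the abstract, coordinate-free version of the computation the paper itself performs in Remark \ref{remark-2}, where the same perfect-square structure appears componentwise in exponential coordinates. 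The paper's kernel-theoretic route, on the other hand, is what generalizes most painlessly to the conditionally i.i.d.\ case via de Finetti (its Remark after the proposition), though your argument extends there too by conditioning on the latent variable $\theta$ before factoring. Your attention to the two technical points — that $w(\bx,\by)$ and $w(\bx,\bz)$ lie in the common tangent space $T_{\bx}\mathcal M$, and that the cut-locus/coincidence set where $w$ is set to zero is $\mu_0\times\mu_0$-null and harmless — is exactly what is needed for the factorization to be legitimate.
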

\begin{proof}
We calculate that 
\begin{align*}
  &\left\|A\phi \right\|_{L^2(\mu)}^2 =  \mathbb E_{\bX \sim \mu}\left[\left\|\rhsfo_{\phi}(\bX) \right\|_{T_{\bX} \mathcal{M}^N}^2\right] 
     \\&= \mathbb E_{\bX\sim \mu} \sum\limits_{i=1}^N \frac1N g\left( \frac1N \sum\limits_{j\neq i}\phi(P_{ij}(\boldsymbol{X})) \omega(\bx_i,\bx_j), \frac1N \sum\limits_{k\neq i} \phi(P_{ik}(\boldsymbol{X})) \omega(\bx_i,\bx_k) \right)\\
     &=\frac{1}{N^3}\sum_{j\neq i}\mathbb E_{\bX\sim\mu}\Bigl[ \phi^2(P_{ij}(\boldsymbol{X}))P_{ij}^2(\boldsymbol{X})\Bigr] \\&+\frac{1}{N^3}\sum_{j\neq i, k\neq j, k \neq i}\mathbb E_{\bX\sim\mu}\Bigl[ \phi(P_{ij}(\boldsymbol{X}))\phi(P_{ik}(\boldsymbol{X}))g\Bigl(  \omega(\bx_i,\bx_j), \omega(\bx_i,\bx_k)\Bigr)\Bigr],
\end{align*}
where we used the fact that in view of \eqref{ApproachB}, one has that $\|\omega(\bx_i,\bx_j)\|^2=P_{ij}^2(\boldsymbol{X}).$
Note that 
\begin{align*}
 &  \frac{1}{N^3}\sum_{j\neq i}\mathbb E_{\bX\sim\mu}\Bigl[ \phi^2(P_{ij}(\boldsymbol{X}))P^2_{ij}(\boldsymbol{X})\Bigr]\\
 &=\frac{N(N-1)}{N^3}\int_{\mathcal{M}^N} \frac{1}{N(N-1)}\sum_{i\neq j}\phi ^2(P_{ij}(\boldsymbol{X}))P^2_{ij}(\boldsymbol{X})\, d\mu(\boldsymbol{X})\\
 &=\frac{N-1}{N^2}\|\phi \|^2_{L^2(\rho)}.
\end{align*}
This gives that 
\begin{align*}
     &  \left\|A\phi \right\|_{L^2(\mu)}^2
     \\ &=\frac{N-1}{N^2}\|\phi \|^2_{L^2(\rho)}+\frac{1}{N^3}\sum_{i\neq j\neq k} \mathbb E_{\bX\sim\mu}\Bigl[ \phi(P_{ij}(\boldsymbol{X}))\phi(P_{ik}(\boldsymbol{X}))g\Bigl(  \omega(\bx_i,\bx_j), \omega(\bx_i,\bx_k)\Bigr)\Bigr].
\end{align*}
It therefore suffices to show that for $i\neq j\neq k$ (any three non-equal indices)
\begin{align*}
     C_{ijk}=\mathbb E_{\bX\sim \mu}\Bigl[ \phi(P_{ij}(\boldsymbol{X}))\phi(P_{ik}(\boldsymbol{X}))g\Bigl(  \omega(\bx_i,\bx_j), \omega(\bx_i,\bx_k)\Bigr)\Bigr]\geq 0.
\end{align*}

Due to the special structure of $\mu$, i.e. $\mu=\mu_0\times\cdots\times\mu_0$ with $\mu_0$ the same probability measure for each $\bx_i$, $i=1,\dots, N$, the values of $C_{ijk}$'s are equal and therefore we just need to show it is positive for one such term. Without loss of generality, let $i=1$, $j=2$ and $k=3$, and we further simplify the notations by denoting $\bx:=\bx_1$, $\by:=\bx_2$ and $\bz:=\bx_3$.
We then get that 
\begin{align}\label{C123}
   C_{123}&=\mathbb E_{\bx,\by,\bz}\Bigl[ g\Bigl(  \phi(d(\bx,\by)) w(\bx,\by),\phi(d(\bx,\bz)) w(\bx,\bz) \Bigr)\Bigr ]\nonumber\\
   & =\mathbb E_{\bx}\mathbb E_{\by, \bz}\Bigl[\phi(d(\bx,\by))\phi(d(\bx,\bz)) g(w(\bx,\by),w(\bx,\bz))\Bigr| \bx\Bigr],
\end{align} where the second equality is true due to the independence of $\bx,\by$ and $\bz$.

It therefore suffices to show that for any fixed $\bx,$ we have that 
\begin{align*}
   \mathbb E_{\by, \bz}\Bigl[\phi(d(\bx,\by))\phi(d(\bx,\bz)) g(w(\bx,\by),w(\bx,\bz))\Bigr| \bx\Bigr]\geq 0.
\end{align*}
Now for each fixed $\bx$, $\phi(d(\bx,\by))\phi(d(\bx,\bz))$ and $g(w(\bx,\by),w(\bx,\bz))$ are positive definite kernels with respect to $(\by,\bz)\in \mathcal M\times\mathcal M$ by Theorem \ref{t52} (2)(3) (notice that $g$ is positive definite on $T_{\bx}\mathcal M\times T_{\bx}\mathcal M$), therefore their product is a positive definite kernel by Theorem \ref{t52} (1). Then by (4) of Theorem \ref{t52}, the conclusion follows.
\end{proof}

\begin{remark}In fact, the same analysis can be extended to cover the case of $N$ conditionally i.i.d. random variables, by invoking de Finetti's representation theorem. Specifically, in \eqref{C123},given three conditionally i.i.d random variables $\bx,\by,\bz$, one can do 

\begin{align}
   C_{123}=\mathbb \mathbb{E}_{\theta}E_{\bx|\theta,\by|\theta,\bz|\theta}\Bigl[ g\Bigl(  \phi(d(\bx|\theta,\by|\theta)) w(\bx|\theta,\by|\theta),\phi(d(\bx|\theta,\bz|\theta)) w(\bx|\theta,\bz|\theta) \Bigr)\Bigr |\theta \Bigr]
\end{align} where $\theta$ is the latent random variable conditioning on which that  $\bx|\theta,\,\by|\theta,\,\bz|\theta$ are independent. Then with  a slight modification on the proof, we can show that $C_{123} \geq 0$. 
\end{remark}

         \subsection{Discussion on the sharpness of the bound {$\frac{\sqrt{N-1}}{N}$} on specific manifolds}
\textcolor{black}{The bound $\frac{\sqrt{N-1}}{N}$ exhibits a somewhat counterintuitive characteristic: it decreases as $N$ increases. At first glance, one might expect that more data, represented by a larger $N$, would naturally enhance the learning process due to the increased number of equations in our model. Notably, our kernel remains a 1D function, consistent across all these equations.}

\textcolor{black}{However, this initial intuition overlooks the complexities inherent in inverse problems, particularly in the context of dynamical systems where data dependencies can be intricate. This observation has prompted a deeper analysis of the underlying dynamics and their impact on our learning process.}

In this section, we explore whether this bound  is sharp or can be enhanced by either confining $\mathcal{H}$ to a smaller subspace, such as a compact subspace of $L^2([0,R];\rho\mres[0,R];\mathbb{R})$, or by considering specific manifolds that enable us to refine the outcomes.
We first discuss a criterion to decide whether, given a compact subset $\mathcal H\subset L^2([0,R];\rho\mres[0,R];\mathbb{R})$, there is going to be a larger bound in general or not, when restricting the map $A$ to it.

\begin{theorem}
\label{remark1}
Suppose that $\mu$ is the joint distribution of $N$ i.i.d random variables on $\mathcal M.$ Let $\mathcal H\subset L^2([0,R];\rho\mres[0,R];\mathbb{R})$ be a compact subspace (i.e., the unit ball of $\mathcal{H}$ is compact with respect to the $L^2(\rho)$ norm) consisting of continuous functions. If for any nonzero $\varphi \in \mathcal H,$ there exists some $\bx\in \textup{supp}(\mu)\subset \mathcal{M}^N$ such that 
\begin{align}\label{Positvityatx1}
     \mathbb E_{\by,\bz}\Bigl[ \varphi(d(\bx,\by))\varphi(d(\bx,\bz)) g\Bigl(\omega(\bx,\by), \omega(\bx,\bz) \Bigr)\Big|\bx\Bigr ]
    > 0,
\end{align}
Then one has that 
\begin{align*}
    \inf_{\varphi\in\mathcal H,\varphi 
    \neq 0} \frac{\|A\varphi\|^2_{L^2(\mu)}}{\|\varphi\|^2_{L^2(\rho)}}\geq \frac{N-1}{N^2}+c_{\mathcal H, \mathcal M}\frac{(N-1)(N-2)}{N^2}, 
\end{align*}
where  \begin{align}\label{psconstant}
       c_{\mathcal H,\mathcal M}:=\inf_{\varphi\in \mathcal H, \|\varphi\|_{L^2(\rho)}=1}  \mathbb E\Bigl[ \varphi(d(\bx,\by))\varphi(d(\bx,\bz)) g\Bigl(w(\bx,\by), w(\bx,\bz) \Bigr)\Bigr ]>0.
    \end{align}
\end{theorem}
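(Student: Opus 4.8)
The plan is to build directly on the exact identity produced inside the proof of Proposition~\ref{mainresult}. Since $\mu=\mu_0\times\cdots\times\mu_0$ is a product of $N$ identical marginals, every cross term $C_{ijk}$ with three distinct indices equals the single quantity
\[
C_{123}(\varphi)=\E\Bigl[\varphi(d(\bx,\by))\varphi(d(\bx,\bz))\,g\bigl(\omega(\bx,\by),\omega(\bx,\bz)\bigr)\Bigr],
\]
and there are exactly $N(N-1)(N-2)$ of them. Feeding this into the identity $\|A\varphi\|_{L^2(\mu)}^2=\frac{N-1}{N^2}\|\varphi\|_{L^2(\rho)}^2+\frac{1}{N^3}\sum_{i\ne j\ne k}C_{ijk}$ established there yields
\[
\frac{\|A\varphi\|^2_{L^2(\mu)}}{\|\varphi\|^2_{L^2(\rho)}}
=\frac{N-1}{N^2}+\frac{(N-1)(N-2)}{N^2}\cdot\frac{C_{123}(\varphi)}{\|\varphi\|^2_{L^2(\rho)}},
\]
so the whole statement reduces to showing that $c_{\mathcal H,\mathcal M}=\inf_{\|\varphi\|_{L^2(\rho)}=1}C_{123}(\varphi)$ is strictly positive.

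The first step I would carry out is the factorization of $C_{123}$. Conditioning on $\bx$ and using the independence of $\by$ and $\bz$, define $L_{\bx}(\varphi):=\E_{\by}[\varphi(d(\bx,\by))\,\omega(\bx,\by)\mid\bx]\in T_{\bx}\mathcal M$; bilinearity of $g$ then gives $C_{123}(\varphi)=\E_{\bx}\bigl[\|L_{\bx}(\varphi)\|^2_{T_{\bx}\mathcal M}\bigr]\ge 0$, and the integrand appearing in the hypothesis \eqref{Positvityatx1} is exactly $\|L_{\bx}(\varphi)\|^2$. Thus \eqref{Positvityatx1} asserts precisely that, for each nonzero $\varphi\in\mathcal H$, there is a point $\bx$ in the support of the common single-particle marginal $\mu_0$ at which $L_{\bx}(\varphi)\ne 0$. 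I would then show that $\bx\mapsto\|L_{\bx}(\varphi)\|^2$ is continuous---working in a local frame of the tangent bundle and using continuity of $\varphi$, of $d(\cdot,\cdot)$, and of $\omega(\bx,\cdot)$ off the measure-zero cut locus, together with dominated convergence over the bounded support $[0,R]$---so that positivity at a support point propagates to an open set of positive $\mu_0$-measure, giving $C_{123}(\varphi)>0$ for every nonzero $\varphi$.

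To turn this pointwise strict positivity into a uniform lower bound I would exploit the compactness hypothesis. First, $C_{123}$ is a continuous quadratic form on $(\mathcal H,\|\cdot\|_{L^2(\rho)})$: Jensen's inequality gives $\|L_{\bx}(\varphi)\|^2\le\E_{\by}[\varphi^2(d(\bx,\by))d^2(\bx,\by)\mid\bx]$, hence $C_{123}(\varphi)\le\E_{\bx,\by}[\varphi^2(d(\bx,\by))d^2(\bx,\by)]=\|\varphi\|^2_{L^2(\rho)}$, and polarizing the nonnegative bilinear form $(\varphi,\psi)\mapsto\E_{\bx}[g(L_{\bx}(\varphi),L_{\bx}(\psi))]$ and applying Cauchy--Schwarz shows it is bounded by $\|\varphi\|_{L^2(\rho)}\|\psi\|_{L^2(\rho)}$, so $C_{123}$ is Lipschitz on bounded sets. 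Since the unit sphere $\{\varphi\in\mathcal H:\|\varphi\|_{L^2(\rho)}=1\}$ is a closed subset of the compact unit ball of $\mathcal H$, it is itself compact; a continuous, strictly positive function on a compact set attains a positive minimum, so $c_{\mathcal H,\mathcal M}>0$, and combining with the displayed identity completes the proof.

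I expect the genuine obstacle to be the continuity of $\bx\mapsto L_{\bx}(\varphi)$ and of its squared norm, since $\omega(\bx,\cdot)$ jumps across the cut locus and $L_{\bx}(\varphi)$ lives in the moving fiber $T_{\bx}\mathcal M$; one must combine a local trivialization of $T\mathcal M$ with the facts that the cut locus is $\mu_0$-null and that $\varphi$ is supported in $[0,R]$ to justify passing to the limit under the $\by$-integral. Everything else---the combinatorial reduction, the factorization, the Jensen/Cauchy--Schwarz bounds, and the compactness argument---is routine once this continuity is secured.
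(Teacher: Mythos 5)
Your proposal is correct and takes essentially the same route as the paper's proof: the same reduction through the identity established in Proposition~\ref{mainresult}, the same continuity of the conditional expectation in $\bx$ (handling the cut locus and dominated convergence identically), the same Cauchy--Schwarz continuity of the quadratic form $\varphi\mapsto G(\varphi,\varphi)$, and the same compactness argument on the unit sphere of $\mathcal H$ yielding $c_{\mathcal H,\mathcal M}>0$. Your factorization $C_{123}(\varphi)=\E_{\bx}\bigl[\|L_{\bx}(\varphi)\|^2_{T_{\bx}\mathcal M}\bigr]$ is only a presentational variant of the paper's positive-definite-kernel argument and the coordinate computation in Remark~\ref{remark-2}, not a genuinely different argument.
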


\begin{proof}  We divide the proof into several steps.

\textbf{Step 1.} We claim that the function
 \begin{align*}
     \bx \mapsto \mathbb E_{\by,\bz}\Bigl[ \varphi(d(\bx,\by))\varphi(d(\bx,\bz)) g\Bigl(\omega(\bx,\by), \omega(\bx,\bz) \Bigr)\Big|\bx\Bigr ]
 \end{align*}
is continuous.
 Take a sequence $\{\bx_n\} \subset \mathcal{M}$ such that $\text{lim}_{n \rightarrow \infty}\bx_n=\bx$. Due to the continuity of $\varphi$, the (Lipschitz) continuity of $d(\cdot,\by),$ and $d(\cdot,\bz),$ respectively, we have 
 $\varphi(d(\bx_n,\by))\varphi(d(\bx_n,\bz))\rightarrow \varphi(d(\bx,\by))\varphi(d(\bx,\bz))$ as $n\rightarrow \infty$ for each $\by, \bz\in \mathcal{M}$. 
 
 Recall that $w(\bx,\by)=\exp^{-1}_{\bx}(\by)$, $w(\bx,\bz)=\exp^{-1}_{\bx}(\bz)$ for $\by,\bz\notin\mathcal{CL}(\bx)$. Moreover, if $\by,\bz\notin\mathcal{CL}(\bx)$, then $\by,\bz\notin\mathcal{CL}(\bx_n)$ for $n$ sufficiently large. Therefore, $\exp_{\bx_n}^{-1}(\by)\to \exp_{\bx}^{-1}(\by)$, $\exp_{\bx_n}^{-1}(\bz)\to \exp_{\bx}^{-1}(\bz)$ as $n\to\infty$, for $\by,\bz\notin\mathcal{CL}(\bx)$. Note that $\mathcal{CL}(\bx)$ has zero measure with respect to $\mu_0$, it follows that $g(w(\bx_n,\by),w(\bx_n,\bz))\to g(w(\bx,\by),w(\bx,\bz))$ a.e. on $\mathcal M\times\mathcal M$ with respect to the measure $\mu_0\times\mu_0$.
 \textcolor{black}{Note that we have 
 \begin{align*}
     \bigg|\varphi(d(\bx_n,\by))\varphi(d(\bx_n,\bz)) g\Bigl(\omega(\bx_n,\by), \omega(\bx_n,\bz) \Bigr)\bigg| \leq \|\varphi\|_\infty^2R^2
 \end{align*}
$\mu-$ almost everywhere thanks to the compact support of $\varphi$.} 
The claim then follows from dominated convergence theorem as the integrand is continuous in $\bx$ for almost every $(\by,\bz)\in \mathcal M\times \mathcal M.$


 We thus have that 
 \begin{align*}
     &\lim_{n\rightarrow \infty}\mathbb E_{\by,\bz}\Bigl[ \varphi(d(\bx_n,\by))\varphi(d(\bx_n,\bz)) g\Bigl(\omega(\bx_n,\by), \omega(\bx_n,\bz) \Bigr)\Big|\bx_n\Bigr ]\\
     =& \lim_{n \rightarrow \infty} \int_{\mathcal M \times \mathcal M}\varphi(d(\bx_n,\by))\varphi(d(\bx_n,\bz)) g\Bigl(\omega(\bx_n,\by), \omega(\bx_n,\bz) \Bigr)\, d\mu(\by)d\mu(\bz)\\
     =&\int_{\mathcal M \times \mathcal M}\varphi(d(\bx,\by))\varphi(d(\bx,\bz)) g\Bigl(\omega(\bx,\by), \omega(\bx,\bz) \Bigr)\, d\mu(\by)d\mu(\bz),
 \end{align*}
 as desired. 
 
\textbf{Step 2.} Consider the bilinear map
$$G: L^2([0,R];\rho\mres [0,R];\mathbb R)\times L^2([0,R];\rho\mres [0,R];\mathbb R)\to \mathbb R$$
defined by
$$G(\varphi, \psi):=\mathbb{E}\Bigl[ \varphi(d(\bx,\by))\psi(d(\bx,\bz)) g\Bigl(w(\bx,\by), w(\bx,\bz) \Bigr)\Bigr ].$$
We claim that the map 
 \begin{align}\label{map1}
L^2([0,R];\rho\mres[0,R];\mathbb{R})\rightarrow\mathbb R, \quad  \varphi \mapsto G(\varphi,\varphi):=\mathbb{E}\Bigl[ \varphi(d(\bx,\by))\varphi(d(\bx,\bz)) g\Bigl(w(\bx,\by), w(\bx,\bz) \Bigr)\Bigr ]
    \end{align} is continuous. Using Cauchy–Schwarz inequality
    \begin{align*}
        |G(\varphi,\psi)|\leq \|\varphi\|_{L^2(\rho)}\|\psi\|_{L^2(\rho)}
    \end{align*} 
    Then notice that $$|G(\varphi_n,\varphi_n)-G(\varphi,\varphi)|\leq \|\varphi_n-\varphi\|_{L^2(\rho)}\|\varphi\|_{L^2(\rho)}+ \|\varphi_n-\varphi\|_{L^2(\rho)}\|\varphi_n\|_{L^2(\rho)}.$$ Let $\varphi_n\rightarrow \varphi$ in $\mathcal{H}$ endowed with $L^2(\rho)$ norm, and then we know if $n$ is large enough, we have $\|\varphi_n\|_{L^2(\rho)}\leq 2\|\varphi\|_{L^2(\rho)}$. Consequently, we have 
    $$\lim_{n\rightarrow\infty}G(\varphi_n,\varphi_n)=G(\varphi,\varphi).$$

{\bf Step 3.} Note that \textcolor{black}{by the total law of probability we have}
\begin{align}\label{pconstant2}
        c_{\mathcal H,\mathcal M}&=\inf_{\varphi\in \mathcal H, \|\varphi\|_{L^2(\rho)}=1}  \mathbb E\Bigl[ \varphi(d(\bx,\by))\varphi(d(\bx,\bz)) g\Bigl(w(\bx,\by), w(\bx,\bz) \Bigr)\Bigr ]\\&=\inf_{\varphi\in \mathcal H, \|\varphi\|_{L^2(\rho)}=1}\mathbb{E}_{\bx}\mathbb{E}\Bigl[ \varphi(d(\bx,\by))\varphi(d(\bx,\bz)) g\Bigl(w(\bx,\by), w(\bx,\bz) \Bigr)\Bigr|\bx \Bigr]
    \end{align}
and \textcolor{black}{recall} that 
\begin{align*}
    \mathbb E\Bigl[ \varphi(d(\bx,\by))\varphi(d(\bx,\bz)) g\Bigl(w(\bx,\by), w(\bx,\bz) \Bigr)\Big|\bx\Bigr ]\geq 0
\end{align*}
by the proof of Proposition \ref{mainresult} \textcolor{black}{so that $c_{\mathcal H,\mathcal M}\geq 0$}.

\textcolor{black}{By compactness of $\mathcal{H}$, the continuity of the conditional expectation w.r.t $\bx$  and the continuity of the map \eqref{map1}, one can conclude that $c_{\mathcal H,\mathcal M}$ is positive once the property \eqref{Positvityatx1} is satisfied.} Then using \eqref{C123} in the proof of Proposition \ref{mainresult}, we see that this implies 
    \begin{align*}
           \|A\varphi\|^2_{L^2(\rho)}
     \geq \frac{N-1}{N^2}\|\varphi \|^2_{L^2(\rho)}+c_{\mathcal H, \mathcal M}\frac{N(N-1)(N-2)}{N^3}\|\varphi \|^2_{L^2(\rho)}
    \end{align*}
    which gives the result.
\end{proof}

\begin{remark}\label{remark-2}
Theorem \ref{remark1} provides a way to explore whether we can improve the bound by restricting on a smaller compact subspaces of $L^2([0,R];\rho\mres[0,R];\mathbb{R})$. Using exponential map, one can easily check the condition \eqref{Positvityatx1}. 

 Specifically,
let   $\varphi\in L^2(\rho)$ and $\bx,\by,\bz\sim \mu_0$ i.i.d with densitiy function $p(\cdot).$ We denote $u:=\omega(\bx,\by)$ and $v:=\omega(\bx,\bz)$.
Notice that for $\by\, ,\bz\notin \mathcal{CL}(\bx)$,
\begin{align*}
    u=\exp_{\bx}^{-1}(\by)\quad \textup{and }\quad  v=\exp_{\bx}^{-1}(\bz),
\end{align*}
so $\|u\|=d(\bx,\by)$ and $\|v\|=d(\bx,\bz)$. Moreover, since the volume measure of $\mathcal{CL}(\bx)$ is zero, $\mu_0(\mathcal{CL}(\bx))=0$ too.
In view of \eqref{integral on M}, we obtain that
\begin{align*}
   & \mathbb E_{\by, \bz}\Bigl[\varphi(d(\bx,\by))\varphi(d(\bx,\bz)) g(w(\bx,\by),w(\bx,\bz))\Bigr| \bx\Bigr]\\
   =& \int_{\mathcal M}\int_{\mathcal M}\varphi(d(\bx,\mathbf{y}))\varphi(d(\bx,\mathbf{z}))g_{\bx}\left(w(\bx,\by),w(\bx,\bz)\right)p(\by)p(\bz)\,dV_{\mathcal M}(\by)dV_{\mathcal M}(\bz)\\
   = & \int_{\mathcal D(\bx)}\int_{\mathcal D(\bx)} \varphi(\|u\|)\varphi(\|v\|) g(u,v)p(\exp_{\bx}(u))p(\exp_{\bx}(v))\sqrt{\det (G_{\bx}(u))}\sqrt{\det (G_{\bx}(v))}\, d\lambda^n(u) d\lambda^n(v).
\end{align*}
We choose an orthonormal basis $(\mathbf{e}_i)_{i=1}^n$ at $\bx$ with respect  to the metric $g$ on $T_{\bx}\mathcal M$, so $u=\sum_{i=1}^n u^i \mathbf{e}_i$, $v=\sum_{i=1}^n v^i \mathbf{e}_i$ and
$$g_{\bx}(u,v)=\sum_{i=1}^n u^i v^i.$$
Then
\begin{align*} & \mathbb E_{\by, \bz}\Bigl[\varphi(d(\bx,\by))\varphi(d(\bx,\bz)) g(w(\bx,\by),w(\bx,\bz))\Bigr| \bx\Bigr]\\
=&\sum_{i=1}^n\Bigl[\int_{\mathcal D(\bx)} u^i\varphi(\|u\|) p(\exp_{\bx}(u))\sqrt{\det (G_{\bx}(u))}\,d\lambda^n(u)\Bigr]^2 \geq 0
\end{align*} 
It thus suffices to find $\bx \in \textup{supp}(\mu_0)$ such that\begin{align}\label{easy-condition}
\int_{\mathcal D(\bx)}u^i\varphi(\|u\|)p(\exp_{\bx}(u))\sqrt{\det (G_{\bx}(u))}\,d\lambda^n(u)\neq 0
\end{align} for some $i=1,\dots, n.$
\textcolor{black}{In summary, we simplify the estimation of the condition number of the operator $A$ by leveraging the indistinguishability among particles. This approach narrows the estimation to a functional, denoted by equation \eqref{Positvityatx1}, which only involves a pair of particles relative to a reference particle. Utilizing the coordinate system of the manifold, we further streamline the process to checking  an explicit integral  outlined in \eqref{easy-condition}. The proposed strategy  could be beneficial in choosing hypothesis space wherever it is feasible to compute or estimate the quantity \eqref{easy-condition}. Later, we show it proves effective in both spherical and hyperbolic contexts, as detailed in Propositions \ref{Sphere} and \ref{Hyb}.}

\end{remark}

The reference \cite{Maggioni2021LearningIK} provides numerical examples of first-order opinion dynamics and Lennard-Jones dynamics on both the sphere and the Poincaré disk. These examples motivated our exploration of refining stability results on these spaces, as well as on hyperbolic space more generally.

\subsubsection{Sphere}

We first consider $\mathcal M=\mathbb S^n$, and $\mu=\textup{Unif}((\mathbb S^n)^N)$ is the uniform distribution on sphere.

\begin{proposition}\label{Sphere}
If $\mathcal{M}=\mathbb{S}^n$ and we choose $\mu= \mathrm{Unif}((\mathbb S^n)^{N}).$ Then  one has that
\begin{align*}
    \inf_{\varphi\in \mathcal H, \varphi\neq 0} \frac{\|A\varphi\|_{L^2(\mu)}}{\|\varphi\|_{L^2(\rho)}}=\frac{\sqrt{N-1}}{N}
\end{align*}
for any nontrivial subspace $\mathcal H\subset L^2([0,R];\rho\mres[0,R];\mathbb{R}).$
\end{proposition}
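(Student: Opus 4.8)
The plan is to show that the positive cross-term appearing in the exact identity from Proposition \ref{mainresult} vanishes \emph{identically} when $\mu = \mathrm{Unif}((\mathbb{S}^n)^N)$, so that the lower bound $\tfrac{\sqrt{N-1}}{N}$ is attained by every nonzero $\varphi$ and the infimum over any nontrivial subspace equals it. First I would recall the exact decomposition established inside the proof of Proposition \ref{mainresult}: counting the $N(N-1)(N-2)$ distinct triples and using that the i.i.d.\ symmetry makes all $C_{ijk}$ equal to a single $C_{123}$, one has
$$\|A\varphi\|^2_{L^2(\mu)} = \frac{N-1}{N^2}\|\varphi\|^2_{L^2(\rho)} + \frac{(N-1)(N-2)}{N^2}\, C_{123}.$$
Since Proposition \ref{mainresult} already furnishes the inequality $\|A\varphi\|_{L^2(\mu)} \geq \tfrac{\sqrt{N-1}}{N}\|\varphi\|_{L^2(\rho)}$, it suffices to prove $C_{123}=0$ for every $\varphi\in\mathcal H$; then the quotient is the \emph{constant} $\tfrac{N-1}{N^2}$, so its square root $\tfrac{\sqrt{N-1}}{N}$ is the value (not merely a bound) for all nonzero $\varphi$, and the infimum over any nontrivial $\mathcal H$ equals it.

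To establish $C_{123}=0$ I would invoke the exponential-coordinate formula derived in Remark \ref{remark-2}, which expresses
$$C_{123} = \mathbb{E}_{\bx}\left[\sum_{i=1}^n \left(\int_{\mathcal D(\bx)} u^i\, \varphi(\|u\|)\, p(\exp_{\bx}(u))\sqrt{\det(G_{\bx}(u))}\, d\lambda^n(u)\right)^2\right],$$
and then exploit the homogeneity of the round sphere. For $\mu=\mathrm{Unif}((\mathbb{S}^n)^N)$ the density $p$ equals the constant $1/\mathrm{Vol}(\mathbb{S}^n)$ and $\mathcal D(\bx)=B_\pi(\mathbf 0)$ for every $\bx$. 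Passing to polar coordinates $u=t\mathbf v$ with $t\in[0,\pi)$ and $\mathbf v\in\mathbb{S}^{n-1}$, and reading off $\sqrt{\det(G_{\bx}(u))}\, d\lambda^n(u)=\sin^{n-1}(t)\, d\sigma(\mathbf v)\, dt$ from \eqref{integralonsphere} together with $d\lambda^n(u)=t^{n-1}\, d\sigma(\mathbf v)\, dt$, each inner integral factorizes, using $u^i=tv^i$ and $\|u\|=t$, as
$$p\int_0^\pi t\,\varphi(t)\sin^{n-1}(t)\, dt \cdot \int_{\mathbb{S}^{n-1}} v^i\, d\sigma(\mathbf v).$$

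The decisive observation is then that the angular factor $\int_{\mathbb{S}^{n-1}} v^i\, d\sigma(\mathbf v)$ vanishes by the odd symmetry of each coordinate function over the centered sphere, so every inner integral is zero and hence $C_{123}=0$ regardless of $\varphi$. Substituting back yields $\|A\varphi\|^2_{L^2(\mu)}=\tfrac{N-1}{N^2}\|\varphi\|^2_{L^2(\rho)}$ for all $\varphi$, from which the claimed equality is immediate. The only points requiring care — none of them a genuine obstacle — are verifying that the integrand is independent of the base point $\bx$ so the outer expectation is trivial (this is precisely the homogeneity of the round metric), and matching the Jacobian $\sqrt{\det(G_{\bx}(u))}$ against the volume density $\sin^{n-1}(t)/t^{n-1}$ implicit in \eqref{integralonsphere}. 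The genuinely clean feature, and the reason sharpness holds for \emph{every} nontrivial $\mathcal H$, is that the entire cross-term is annihilated by a single elementary symmetry integral rather than by any property of $\varphi$.
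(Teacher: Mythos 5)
Your proof is correct and takes essentially the same route as the paper: both reduce the problem to showing that the cross-term $C_{123}$ in the decomposition from Proposition \ref{mainresult} vanishes identically, via the exponential-coordinate formula of Remark \ref{remark-2}, and then kill each inner integral by factoring in polar coordinates and observing that $\int_{\mathbb{S}^{n-1}} v^i\, d\sigma(\mathbf v) = 0$ by odd symmetry. The paper's proof is simply a terser version of yours (it verifies the vanishing of the integrals in \eqref{easy-condition} for the constant density and cites Remark \ref{remark-2} for the rest), so there is nothing to add.
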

\begin{proof}
Since we consider the uniform distribution, the density function of each particle is $p(\mathbf{u})=\frac{1}{\text{Vol}(\mathbb{S}^n)}.$
Recall that in the sphere case, we have that  
\begin{align*}
&\int_{D(\bx)}\mathbf{u}_i\varphi(\|\mathbf{u}\|)p(\exp_{\mathbf{x}}(\mathbf{u}))\sqrt{G_{\mathbf{x}}(\mathbf{u})}\,d\mathbf{u}\\
    &=\frac{1}{\text{Vol}(\mathbb S^n)}\int_0^\pi {\sin^{n-1}}(t)t\varphi(t)\,dt\int_{\mathbb S^{n-1}}\mathbf u_i\, d\mathcal \sigma(\mathbf{u})\,dt
=0
\end{align*}
for any $i=1,\dots,n,$ as the functions $\mathbf{u}_i$ are odd and domain of integration is symmetric.  The claim then follows from the computation done in Remark \ref{remark-2}.
\end{proof}

 We see that in the sphere case,  \eqref{Positvityatx1} is always zero. As a consequence, 
 on any compact subsets $\mathcal H\subset L^2([0,R];\rho\mres[0,R];\mathbb{R}),$ we find that the $c_{\mathcal H,\mathcal M}$ is equal to zero and consequently, the bound $\frac{\sqrt{N-1}}{N}$ is sharp.

\subsubsection{Hyperbolic space}
Next we show a positive result on the hyperbolic space $\mathbb{H}^n$, where $\mu$ is a product of uniform distribution on a ball of radius $R_0>0$. In this case, we can establish a better bound, depending on $\mathcal H.$
\begin{proposition}\label{Hyb}
 Let $R_0>0,$ $\mathbf{p} \in \mathcal{M}:=\mathbb H^n$ (see definition in section 2), where we assume $ n = 2$ or $n = 3,$ and  $\mu$ have the density function $\frac{1}{\textrm{Vol}_{\mathbb H^n}(B_{R_0}(\mathbf{p}))^N}1_{B_{R_0}(\mathbf{p})\times 
 \cdots \times 
 B_{R_0}(\mathbf{p})}(\textbf X).$ Then for any {\color{black}$\mathcal H\subset \{\varphi\in C([0,2R_0];\rho\mres[0,2R_0];\mathbb R): \supp\varphi\subset [0,R_0), \varphi \text{ only has finite number of zeros in } \supp\varphi\}$} compact (with respect to the topology of $L^2(\rho))$ we get that 
 
\begin{align*}
    \inf_{\varphi\in\mathcal H,\varphi \neq 0} \frac{\|A\varphi\|^2_{L^2(\mu)}}{\|\varphi\|^2_{L^2(\rho)}}\geq \frac{N-1}{N^2}+c_{\mathcal H, \mathcal M}\frac{(N-1)(N-2)}{N^2}.
\end{align*} where $c_{\mathcal H, \mathcal M}>0$ is defined in \eqref{psconstant}. 
\end{proposition}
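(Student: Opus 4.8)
The plan is to verify the single hypothesis of Theorem \ref{remark1}: that for every nonzero $\varphi\in\mathcal H$ there is a reference point $\bx\in\supp(\mu_0)=\overline{B_{R_0}(\mathbf{p})}$ at which the positivity \eqref{Positvityatx1} is strict. By the reduction carried out in Remark \ref{remark-2}, where \eqref{Positvityatx1} is rewritten as $\sum_{i=1}^n[\,\cdot\,]^2$, it is enough to produce some $\bx$ and some index $i$ for which the scalar integral \eqref{easy-condition} is nonzero. Thus the whole argument reduces to analyzing one explicit one-dimensional integral, and no contradiction or global injectivity statement is needed.

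First I would fix $\bx$ at geodesic distance $s\in(0,R_0)$ from the centre $\mathbf{p}$ and choose the orthonormal frame $(\mathbf e_i)_{i=1}^n$ at $\bx$ so that $\mathbf e_1$ is the unit radial vector pointing away from $\mathbf{p}$. Writing $u=t\mathbf v$ in geodesic polar coordinates and using \eqref{integralonhyperbolic} together with $\sqrt{\det G_{\bx}(t\mathbf v)}\,d\lambda^n=\sinh^{n-1}(t)\,dt\,d\sigma(\mathbf v)$, the quantity \eqref{easy-condition} for $i=1$ becomes
$$\frac{1}{\mathrm{Vol}_{\mathbb H^n}(B_{R_0}(\mathbf{p}))}\int_0^{R_0} t\,\varphi(t)\,\sinh^{n-1}(t)\,M(t,s)\,dt,\qquad M(t,s):=\int_{\{\mathbf v:\,\mathbf v_1<\tau(t,s)\}}\mathbf v_1\,d\sigma(\mathbf v),$$
where the condition $p(\exp_{\bx}(t\mathbf v))\neq 0$, i.e. $\exp_{\bx}(t\mathbf v)\in B_{R_0}(\mathbf{p})$, has been rewritten via the hyperbolic law of cosines $\cosh d(\exp_{\bx}(t\mathbf v),\mathbf{p})=\cosh t\cosh s+\sinh t\sinh s\,\mathbf v_1$ as the spherical cap $\{\mathbf v_1<\tau(t,s)\}$ with $\tau(t,s)=\frac{\cosh R_0-\cosh t\cosh s}{\sinh t\sinh s}$. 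A direct computation of the cap moment $M(t,s)$ (which equals $-2\sqrt{1-\tau^2}$ for $n=2$ and $\pi(\tau^2-1)$ for $n=3$) shows that $M(t,s)<0$ precisely when $\tau\in(-1,1)$, and that $\tau\in(-1,1)$ is equivalent to $|t-s|<R_0<t+s$, i.e. to $t\in(R_0-s,R_0)$; for $t\in(0,R_0-s)$ the entire ball is visible and $M\equiv0$. Hence the integral collapses to $\int_{R_0-s}^{R_0}$ of an integrand carrying the strictly negative factor $M$ and the strictly positive factor $t\sinh^{n-1}(t)$.

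The remaining freedom is the choice of $s$, which I would use to align the window $(R_0-s,R_0)$ with a region on which $\varphi$ has a definite sign. Set $b:=\sup\supp\varphi\in(0,R_0)$ and let $a\in[0,b)$ be the largest zero of $\varphi$ inside $(0,b)$ (take $a=0$ if there is none); the finitely-many-zeros hypothesis guarantees that $\varphi$ carries a single constant sign throughout $(a,b)$. Choosing $s\in(R_0-b,\,R_0-a]\cap(0,R_0)$ forces $a\le R_0-s<b$, so the window $(R_0-s,R_0)$ meets $\supp\varphi$ only inside $(a,b)$, where $\varphi\neq0$ is sign-definite. Consequently $t\,\varphi(t)\,\sinh^{n-1}(t)\,M(t,s)$ keeps a single strict sign on $(R_0-s,b)$ and vanishes elsewhere on the window, so the integral is nonzero. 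Thus \eqref{easy-condition} holds for $i=1$, \eqref{Positvityatx1} is strict at this $\bx$, and Theorem \ref{remark1} yields $c_{\mathcal H,\mathcal M}>0$ together with the asserted lower bound.

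The main obstacle is the cap-moment step: one must verify from the law of cosines that $M(t,s)$ is sign-definite and pin down the exact $t$-range on which it is nonzero, then make the bookkeeping between that range and the sign-definite region of $\varphi$ airtight, in particular checking that the chosen $\bx$ stays in $\supp(\mu_0)$ and that the endpoints $t=R_0-s$ and $t=b$ do not spoil the strict inequality. The restriction to $n=2,3$ is what makes the cap moment and the volume constant available in closed form, while the continuity and finite-zero assumptions on $\mathcal H$ are used precisely to secure the constant sign of $\varphi$ near the top of its support.
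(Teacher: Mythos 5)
Your proof is correct, and its core step is genuinely different from the paper's. Both arguments share the same reduction---invoke Theorem \ref{remark1} and Remark \ref{remark-2} so that everything hinges on exhibiting one base point at which the scalar integral \eqref{easy-condition} is nonzero---and both exploit the finite-zero hypothesis to isolate an interval $(a,b)$ at the top of $\supp\varphi$ on which $\varphi$ is sign-definite. The difference is how the nonvanishing is established. The paper never evaluates the integral: it views \eqref{easy-condition} as a function of a base point $\gamma(t)$ sliding along a geodesic through $\mathbf{p}$, differentiates in $t$ via the Reynolds transport theorem, and shows the resulting moving-boundary integral is nonzero at $t_\delta = 2R_0-T+\delta$, so the function is non-constant and hence nonzero somewhere. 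You instead evaluate the integral at a fixed base point at distance $s$ from $\mathbf{p}$: the hyperbolic law of cosines converts the indicator of $B_{R_0}(\mathbf{p})$ into the spherical cap $\{\mathbf{v}_1<\tau(t,s)\}$, the cap moment $M(t,s)$ is strictly negative precisely for $t\in(R_0-s,R_0)$ and vanishes for smaller $t$, and choosing $s$ with $a\le R_0-s<b$ makes the integrand single-signed on $(R_0-s,b)$ and zero elsewhere. Your route is more elementary and more explicit: it needs no transport theorem, no velocity field or normal-vector continuity argument, and it produces a concrete witness $\bx$ rather than an existence claim; moreover, since by symmetry $M=-\int_{\{\mathbf{v}_1\ge\tau\}}\mathbf{v}_1\,d\sigma<0$ whenever $\tau\in(-1,1)$ in every dimension $n\ge2$, your argument does not actually need $n\in\{2,3\}$---contrary to your closing remark, the closed forms for the cap moment are a convenience, not a necessity---whereas the paper's restriction is simply inherited by your write-up. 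One detail worth tightening (present equally in the paper's proof): the ``largest zero of $\varphi$ in $(0,b)$'' must be shown to exist and be strictly less than $b$, since zeros of $\varphi$ lying off $\supp\varphi$ can fill whole intervals; this does follow, because each such gap is a component of the complement of $\supp\varphi$ whose endpoints are zeros belonging to $\supp\varphi$, of which there are only finitely many, so zeros cannot accumulate at $b$.
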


\begin{proof}
    The density function $p(\bx)$ for marginal distribution of each particle is  given by
\begin{align*}
    p(\bx)=\frac{1}{\text{Vol}_{\mathbb H^n}(B_{R_0}(\mathbf{p}))}1_{B_{R_0}(\mathbf{p})}(\bx)
\end{align*}
and recall that
\begin{align*}
   \mathcal {CL}(\bx) =\emptyset \quad \textup{ for any } \bx \in \mathbb H^n. 
\end{align*}
In view of Equality \eqref{easy-condition}, we only need to consider the integrals
\begin{align}\label{integralsforhyperbolicspace}
    \int_{\mathbb R^n}1_{\exp_{\bx}^{-1}(B_{R_0}(\mathbf{p}))}(\mathbf{u})\varphi(\|\mathbf{u}\|) \mathbf{u}_i \sqrt{G_{\mathbf{x}}(\mathbf{u})}\, d\mathbf{u}\quad \textup{for }i=1,\dots n.
\end{align}
We now pick a geodesic $\gamma$ through $\mathbf{p}$ and let  \textcolor{black}{$q_1,q_2 \in \partial B_{R_0}(\mathbf{p})\cap \gamma(\mathbb R),$ where we assume that $\gamma(0) = q_1$ and $\gamma(R_0) = \mathbf p.$}
 Let $\mathbf{e}_1=\dot \gamma(0)\in T_{\mathbf{q_1}}\mathbb H^n$ and then, completing this into an orthonormal basis $(\mathbf{e}_i),$ gives after parallel translating this along $\gamma$ an orthonormal basis on all of $\gamma.$
 \textcolor{black}
Then the integrals $\eqref{integralsforhyperbolicspace}$ at $\bx = \gamma(t)$ and $i = 1$ are given by 
\begin{align*}
\int_{\exp_{\gamma(t)}^{-1}(B_{R_0}(\mathbf{p}))}\varphi(\|\mathbf{u}\|) \mathbf{u}_1 \frac{\sinh^{n-1}(\|\mathbf u\|)}{\|\mathbf u\|^{n-1}}\, d\mathbf{u}.
\end{align*}
\textcolor{black}
{All we need to show is that this is a non-constant function in $t.$ To do so, we differentiate the integral w.r.t. $t$ and show that for suitable values of $t,$ the derivative is non-zero. Note that the integrand is time independent,
in view of Reynold's transport theorem, see e.g. \cite{leal2007advanced}, we get that
\begin{align}\label{derivative-term}
\frac{d}{dt}\int_{\exp_{\gamma(t)}^{-1}(B_{R_0}(\mathbf{p}))}\varphi(\|\mathbf{u}\|) \mathbf{u}_1 \frac{\sinh^{n-1}(\|\mathbf u\|)}{\|\mathbf{u}\|^{n-1}}\, d\mathbf{u}= \int_{\partial \exp_{\gamma(t)}^{-1}(B_{R_0}(\mathbf{p}))}\varphi(\|\mathbf{u}\|) \mathbf{u}_1 \frac{\sinh^{n-1}(\|\mathbf u\|)}{\|\mathbf{u}\|^{n-1}}{f_t(\mathbf{u})}\, d\sigma,
\end{align}
where $f_t(\mathbf u)=\vec n (\mathbf u, t) \cdot v_t(\mathbf u)$ with $\vec n$ the outward pointing unit normal vector at $\mathbf u  \in \partial \exp_{\gamma(t)}^{-1}(B_{R_0}(\mathbf{p}))$, and 
$v_t$ denotes the velocity vector field on $\partial \exp_{\gamma(t)}^{-1}(B_{R_0}(\mathbf{p}))$ (i.e. the unique vector field on $\partial \exp_{\gamma(t)}^{-1}(B_{R_0}(\mathbf{p}))$ pointing into the future movement of $\partial \exp_{\gamma(t)}^{-1}(B_{R_0}(\mathbf{p}))$ as $t$ varies). Note that since $\exp_{\gamma(t)}^{-1}:\mathbb H^n \rightarrow \mathbb R^n$ is a smooth diffeomorphism (in view of the Hadamard Theorem, see e.g. \cite{DoCarmo}, Chapter 7), we have that 
\begin{align*}
   \partial \exp_{\gamma(t)}^{-1}(B_{R_0}(\mathbf{p})) = \exp_{\gamma(t)}^{-1}(\partial B_{R_0}(\mathbf{p})).
\end{align*}
Moreover, we let $F:[0,\infty) \times \partial B_{R_0}(\mathbf p)\rightarrow \mathbb R^n$ be given by 
\begin{align*}
    F(t,p) = \exp_{\gamma(t)}^{-1}(p),
\end{align*}
so that $F(t, \cdot)$ is a smooth immersion for each $t$. The vector field $v_t: \partial \exp_{\gamma(t)}^{-1}(B_{R_0}(\mathbf{p}))\rightarrow \mathbb R^n$ is then given by 
$$v_t(\mathbf u) = \tfrac{\partial F}{\partial t}(t,\exp_{\gamma(t)}(\mathbf u)).$$}
Since $\varphi \in C_c[0,{R_0})$ and has finite number of zeros in its support,
for any nonzero $\varphi$ we get that there are $0< \varepsilon<T<R_0$ such that $\varphi(t)=0$ for any $t\geq T$ and $\varphi$ is strictly positive or negative on $(T- \varepsilon, T)$.
\textcolor{black}{Now we consider the ball of radius $T$ around $0$ (with respect to the Euclidean distance $\|\cdot \|),$ denoted by $B_T^{\|\cdot \|}.$ 
Let $0<\delta<\varepsilon$ be sufficiently small, then for $t_\delta=2R_0-T+\delta$, $B^{\|\cdot\|}_T\cap \partial \exp_{\gamma(t_\delta)}^{-1}(B_{R_0}(\mathbf p))$ has positive surface measure, moreover
$$T-\varepsilon<\|\mathbf u\|<T\quad \mbox{and}\quad \mathbf u_1>0, \quad \forall \mathbf u\in B^{\|\cdot\|}_T\cap \partial \exp_{\gamma(t_\delta)}^{-1}(B_{R_0}(\mathbf p)).$$
In the meantime, notice that $(T-\delta)\mathbf e_1\in B^{\|\cdot\|}_T\cap \partial \exp_{\gamma(t_\delta)}^{-1}(B_{R_0}(\mathbf p))$ and $v_{t_\delta}((T-\delta)\mathbf e_1)=-\vec n((T-\delta)\mathbf e_1,t_\delta)$, so $f_{t_\delta}((T-\delta)\mathbf e_1)=-1<0$. It follows that for any $\mathbf u\in B^{\|\cdot\|}_T\cap \partial \exp_{\gamma(t_\delta)}^{-1}(B_{R_0}(\mathbf p))$, since $\delta$ is small enough, $f_{t_\delta}(\mathbf u)<0$ by continuity. Note that for $\delta$ sufficiently small
\begin{align*}
\int_{\partial \exp_{\gamma(t_\delta)}^{-1}(B_{R_0}(\mathbf{p}))}\varphi(\|\mathbf{u}\|) \mathbf{u}_1 & \frac{\sinh^{n-1}(\|\mathbf u\|)}{\|\mathbf{u}\|^{n-1}}{f_{t_\delta}(\mathbf{u})}\, d\sigma\\
& =\int_{B^{\|\cdot\|}_T\cap \partial \exp_{\gamma(t_\delta)}^{-1}(B_{R_0}(\mathbf{p}))}\varphi(\|\mathbf{u}\|) \mathbf{u}_1 \frac{\sinh^{n-1}(\|\mathbf u\|)}{\|\mathbf{u}\|^{n-1}}{f_{t_\delta}(\mathbf{u})}\, d\sigma.
\end{align*}
Now the above analysis implies that the integral is non-zero.} This concludes the proof.

\end{proof}

\textcolor{black}{Examples of such hypothesis spaces in Proposition \ref{Hyb} include finite-dimensional piecewise polynomials, trigonometric polynomials, splines restricted to $[0, R_0]$, and real analytic functions on $(0, R_0)$}.

\subsection{Generalization}

Finally, we remark that the stability results also hold on equivalent families of measure $\mu$, which are not necessarily the product of the conditionally i.i.d distribution on $\mathcal{M}^N$. Below we provide an empirical example of learning interaction kernels in opinion dynamics on the sphere. Starting with the initial conditions that follows a distribution with i.i.d components, the distribution $\mu$ for observation data coming from i.i.d trajectories over the time interval [0,5]  does not have i.i.d components anymore. But its associated measure $\rho$ is equivalent to the one formed by the initial condition. So our stability results can be applied to this case as well. 

\begin{figure}[!htbp]
    \centering
    \includegraphics[scale=0.6]{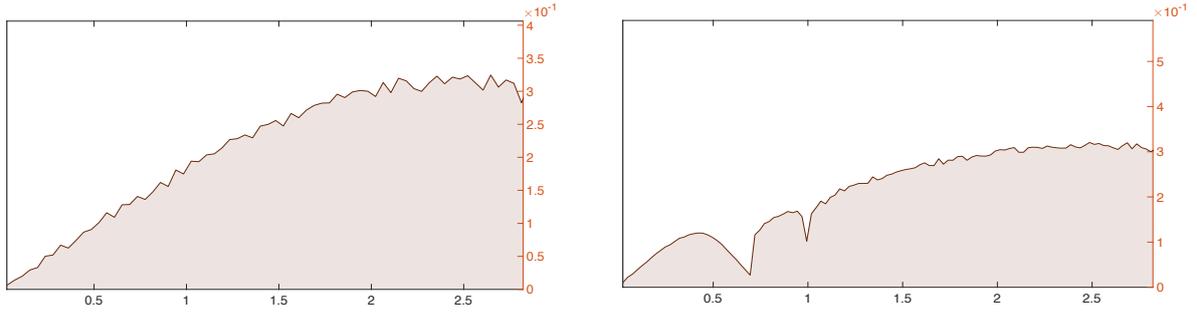}
    \caption{Two examples of $\rho$ from different observation regimes in the same opinion dynamics on $\mathbb{S}^2$ with a piecewise linear kernel $\phi$ that is compactly supported on $[0,\frac{5}{\sqrt{\pi}}]$, where there are 20 agents/opinions evolving as in \cite{Maggioni2021LearningIK} and the injective radius is $\frac{5}{\sqrt{\pi}}$. To obtain empirical approximations of $\rho_1$ and $\rho_2$, we use 3000 trajectories. In the first example, denoted by $\rho_1$, we choose $\mu_1$ to be the product of uniform distribution on $\mathbb{S}^2$ and observe the position and velocity at $t=0$ using i.i.d samples from $\mu_1$. In the second example, denoted by $\rho_2$, we choose $\mu_2$ to be the distribution of positions by observing infinite i.i.d trajectories at time interval [0,5] using $\mu_1$ as the initial distribution. Note that while $\mu_1$ has i.i.d components, $\mu_2$ does not due to time evolution. Despite this difference, we observe numerical evidence that $\rho_2$ is equivalent to $\rho_1$ on $[0,\frac{5}{\sqrt{\pi}}]$. Hence, Corollary \ref{cor} suggests that the stability result also holds for $\rho_2$. }
    \label{fig:my_label_2}
\end{figure}

\begin{corollary}\label{cor}
Suppose that $\tilde u$ and $\tilde \rho$ are equivalent measures to $\mu$ and $\rho\mres[0,R]$ that are defined on $\mathcal{M}^N$ and $[0,R]$ respectively, then $A \in \mathcal{B}(\tilde H, \tilde F)$ where $\tilde H=L^2([0,R];\tilde \rho\mres[0,R];\mathbb{R})$ and $\tilde F=L^2(\mathcal{M}^N;\tilde \mu;T_{\bX}\mathcal{M}^N)$. Moreover, $A$ admits a bounded inverse on $\tilde H$ provided it has a bounded inverse on $\mathcal{H}=L^2(\mathcal{M}^N; \mu;T_{\bX}\mathcal{M}^N)$
\end{corollary}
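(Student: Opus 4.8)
The plan is to reduce the whole statement to two elementary norm comparisons between weighted $L^2$ spaces and then feed them into Propositions~\ref{lem1} and~\ref{mainresult}, which already supply the two-sided bounds $\frac{\sqrt{N-1}}{N}\|\varphi\|_{L^2(\rho)}\le \|A\varphi\|_{L^2(\mu)}\le \frac{N-1}{N}\|\varphi\|_{L^2(\rho)}$ for the original pair $(\mu,\rho)$ in the i.i.d.\ (or conditionally i.i.d.) case. First I would record what equivalence buys us. Since $\tilde\mu$ is equivalent to $\mu$ there is a Radon--Nikodym derivative $h:=\tfrac{d\tilde\mu}{d\mu}$, and likewise $k:=\tfrac{d\tilde\rho}{d(\rho\mres[0,R])}$; the working content of the hypothesis is that these densities are bounded above and below, i.e.\ there are constants $0<c_1\le C_1<\infty$ and $0<c_2\le C_2<\infty$ with $c_1\le h\le C_1$ $\mu$-a.e.\ and $c_2\le k\le C_2$ $\rho$-a.e. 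These immediately give the norm equivalences $\sqrt{c_1}\,\|V\|_{L^2(\mu)}\le \|V\|_{L^2(\tilde\mu)}\le \sqrt{C_1}\,\|V\|_{L^2(\mu)}$ for every vector field $V$ and $\sqrt{c_2}\,\|\varphi\|_{L^2(\rho)}\le \|\varphi\|_{L^2(\tilde\rho)}\le \sqrt{C_2}\,\|\varphi\|_{L^2(\rho)}$ for every $\varphi$; in particular $\tilde H$ and $\mathcal H$ (respectively $\tilde F$ and $\mathcal F$) coincide as sets of measurable functions with comparable norms.

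For the boundedness of $A$, I would simply chain these comparisons with Proposition~\ref{lem1}: for $\varphi\in\tilde H$,
\[
\|A\varphi\|_{L^2(\tilde\mu)}\ \le\ \sqrt{C_1}\,\|A\varphi\|_{L^2(\mu)}\ \le\ \sqrt{C_1}\,\tfrac{N-1}{N}\,\|\varphi\|_{L^2(\rho)}\ \le\ \tfrac{\sqrt{C_1}}{\sqrt{c_2}}\,\tfrac{N-1}{N}\,\|\varphi\|_{L^2(\tilde\rho)},
\]
which gives $A\in\mathcal B(\tilde H,\tilde F)$. For the bounded-inverse claim, I interpret ``$A$ admits a bounded inverse on $\mathcal H$'' as $A$ being bounded below, $\|A\varphi\|_{L^2(\mu)}\ge c\,\|\varphi\|_{L^2(\rho)}$ for some $c>0$ (exactly what Proposition~\ref{mainresult} delivers with $c=\tfrac{\sqrt{N-1}}{N}$). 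Reversing the inequalities,
\[
\|A\varphi\|_{L^2(\tilde\mu)}\ \ge\ \sqrt{c_1}\,\|A\varphi\|_{L^2(\mu)}\ \ge\ \sqrt{c_1}\,c\,\|\varphi\|_{L^2(\rho)}\ \ge\ \tfrac{\sqrt{c_1}\,c}{\sqrt{C_2}}\,\|\varphi\|_{L^2(\tilde\rho)},
\]
so $A$ is bounded below on $\tilde H$ as well; a bounded-below operator has closed range and a bounded inverse on that range, which is the stability assertion.

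The only real content beyond this bookkeeping is the step that converts the measure equivalence of Section~2 into the two-sided density bounds $c_i\le \cdot\le C_i$, and I expect this to be the main obstacle. Mutual absolute continuity (the definition adopted in the paper) by itself only guarantees that $\tilde H=\mathcal H$ and $\tilde F=\mathcal F$ as sets, not as normed spaces: the comparison constants exist precisely when the Radon--Nikodym derivatives are essentially bounded away from $0$ and $\infty$, and a one-sided failure already breaks either boundedness or coercivity. I would therefore state the corollary under this (standard) reading of ``equivalent'' and check it is harmless for the motivating setup: if $\tilde\mu\sim\mu$ then the occupancy measure $\tilde\rho_{\mathrm{nat}}$ built from $\tilde\mu$ through Definition~\ref{themeasure} is automatically mutually absolutely continuous with $\rho$, so that in practice (as in Figure~\ref{fig:my_label_2}) the sole quantity to verify is that the observed and reference occupancy measures have density ratios bounded above and below on $[0,R]$.
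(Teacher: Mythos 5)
Your proposal is correct and takes essentially the same route as the paper: the paper's entire proof is the one-sentence assertion that the Radon--Nikodym derivatives of the two pairs of measures are bounded above and below by positive constants on $[0,R]$, and your inequality chains (feeding Propositions \ref{lem1} and \ref{mainresult} through the resulting norm equivalences) are precisely the details behind that assertion. Your closing observation is also apt: the paper's stated definition of ``equivalent'' (mutual absolute continuity) does not by itself yield those two-sided density bounds, so the paper's proof --- like yours --- tacitly requires the stronger reading of equivalence that you make explicit.
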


\begin{proof}
    The conclusion follow by the fact that Radon-Nikodym derivative of two measures are bounded below  and above by two positive constants  on $[0,R]$.
\end{proof}

\section{Examples where the stability result would fail} \label{singularexample}

In the preceding sections, we demonstrated that if $\mu$ is a product of conditionally independent and identically distributed (i.i.d) distributions, then $A$ has a bounded inverse and we derived an estimate on the lower bound of its operator norm. While restricting the space $\mathcal{H}$ to a compact subset of $L^2([0,R];\rho\mres[0,R];\mathbb{R})$ for some manifold can improve this lower bound, it remains sharp for $\mathbb{S}^n$, irrespective of the choice of $\mathcal{H}$. It is worth recalling that we assumed that $\mu$ is absolutely continuous with respect to the volume measure on $\mathcal{M}^N$, and this assumption is fundamental in establishing stability, which was utilized at several key points in the proofs. We shall now provide examples of singular measures where stability may not hold.

Specifically, we start by demonstrating an example of a measure $\mu$ such that $\rhsfo_{\phi}(\bX)=0$ for $\bX \sim \mu$. We choose $\mathcal{M}= \mathbb{S}^n$ for $n\geq 1$ and we pick $N=n+2$ interacting particles. We begin by considering the case where $\mu= \delta_{\bX_0}$  where the vector $\bX_0=[\cdots,\mathbf{v_i},\cdots]^{\top} \in (\mathbb{S}^n)^N$ stacks the vertices of an $n+1$ dimensional  regular unit simplex. In this case, we have
\begin{align*}
\sum_{i=1}^{N}\mathbf{v}_i&=\mathbf{0},\\
|\langle \mathbf{v}_i, \mathbf{v}_j \rangle| &\equiv c_0, i\neq j\\
\omega_{\mathbb{S}^n}(\mathbf{v}_i,\mathbf{v}_j) &= \mathbf{v}_j -\langle \mathbf{v}_j,\mathbf{v}_i\rangle \mathbf{v}_i,
\end{align*} where $c_0$ is a nonzero constant that is the cosine of the angle between two edges. For example, $c_0=\frac{1}{2}$ when $n=1$. From \eqref{distance-on-sphere},  we have $d_{\mathbb{S}^n}(\mathbf{v}_i,\mathbf{v}_j)\equiv \arccos(c_0)$ for all $i\neq j$. Using these expressions,  we have for $i=1,\cdots,N$
\begin{align*}
[\rhsfo_{\phi}(\bX_0)]_i&=\frac{1}{N}\sum_{j\neq i}\phi(\arccos(c_0))\omega(\mathbf{v}_i,\mathbf{v}_j)\\&= \frac{1}{N} \phi(\arccos(c_0))(\sum_{j=1}^N \mathbf{v}_j-\langle \sum_{j=1}^{N} \mathbf{v}_j, \mathbf{v}_i\rangle\mathbf{v}_i)\\
&=0
\end{align*} Therefore, $\rhsfo_{\phi}(\bX)= 0$ whenever $X\sim \mu$.  More generally, let $U$ be any orthogonal matrix, and note that $\rhsfo_{\phi}(U\bX_0)\equiv 0$. If we take $\mu$ to be the uniform distribution over the image of $\bX_0$ under the orthogonal group, then we shall also have $\|A\phi\|_{L^2(\mu)}$=0. However, even in this case,  $\mu$  is still not absolutely continuous with respect to the volume measure on $(\mathbb{S}^n)^N$.

In conclusion, our findings emphasize the importance of the assumptions made in establishing stability results for $\rhsfo_{\phi}(\bX)$, particularly with regards to the absolute continuity of the measure with respect to the volume measure on the underlying manifold. Understanding the impact of singular measures on stability is crucial for the development of more robust and accurate data-driven discovery methods. 

\section{Conclusion}

We investigate the problem of identifiability of interaction kernels in a first-order system from data. We show that for a large family of distributions $\mu$, the statistical inverse problem is well-posed in specific function spaces. In particular, when $\mu$ is also the distribution of independent trajectories with random initialization, our results  provide  positive examples to the geometric coercivity conjecture in \cite{Maggioni2021LearningIK}, which implies that the least square estimators constructed in \cite{Maggioni2021LearningIK} can achieve the mini-max rate of convergence.  In addition, our results indicate that in the mean-field case $(N\rightarrow \infty)$, the inverse problem becomes ill-posed and effective regularization techniques are needed.

\textcolor{black}{The characterization of the stability of the inverse problem, influenced by the underlying geometry, remains largely open. This challenge arises from understanding how the geometry impacts the measure $\mu$ when observing trajectory data, as well as its effect on the integral in Equation \eqref{easy-condition}. We leave these considerations for future research}. There are  also  several other avenues for future work, including investigating identifiability under stronger norms such as the RKHS norm, generalizing to second-order systems, possibly with multiple interaction kernels, and studying mean-field systems on Riemannian manifolds. These directions can potentially lead to a deeper understanding of the identifiability problem and can have important applications in various fields, such as physics, biology, and social sciences.

\section{Appendix}

\begin{definition}\label{positive definite kernel}
Let $\mathbb{X}$ be a nonempty set. A map $k:\mathbb{X}\times \mathbb{X} \rightarrow \mathbb{R}$ is called a positive definite kernel if $k$ is symmetric and
$$\sum_{i,j=1}^{m}c_{i}c_{j}k(x_i,x_j) \geq 0$$ for all $m\in \mathbb{N},\,\{x_1, x_2, \cdots,x_m\}\subset \mathbb{X}$ and $\{c_1,c_2,\cdots, c_m\} \subset \mathbb{R}$.
\end{definition}

 \begin{theorem}[Properties of positive-definite kernels]\label{t52}
 Suppose that $k, k_1, k_2: \mathbb{X}\times\mathbb{X}\to \mathbb{R}$ are positive-definite kernels. Then
\begin{enumerate} \setlength\itemsep{0mm} 
\item $k_1k_2$ is positive-definite.

\item $f(x)f(y)$ is positive-definite for any function $f:\mathbb{X}\to \mathbb{R}$. 

\item $k(g(u),g(v))$ is positive-definite for any  map $g:\mathbb Y\to \mathbb X$ with another nonempty set $\mathbb Y$. 


\item Let $(\mathbb X,\mathcal U)$ be a measurable space with a $\sigma$-finite measure $\mu$. If $k(x,y)$ is measurable and integrable with respect to $\mu\times \mu$, then $\iint k(x,y)d\mu (x)d\mu (y)\ge0$. 
\end{enumerate}
\end{theorem}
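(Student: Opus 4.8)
The plan is to treat the four assertions in increasing order of difficulty, reducing each one (except the last) to an elementary statement about symmetric positive semidefinite matrices, and to handle the integral inequality by a discretization and limiting argument. Throughout I use that ``$k$ is a positive-definite kernel'' is equivalent to ``for every finite set $x_1,\dots,x_m$ the matrix $(k(x_i,x_j))$ is symmetric positive semidefinite.''

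First I would dispose of (2) and (3), which are immediate. For (2), fixing $x_1,\dots,x_m$ and $c_1,\dots,c_m$, one has $\sum_{i,j} c_i c_j f(x_i)f(x_j) = \big(\sum_i c_i f(x_i)\big)^2 \ge 0$, and symmetry is obvious, so $f(x)f(y)$ is a rank-one positive-definite kernel. For (3), given $u_1,\dots,u_m \in \mathbb{Y}$ and coefficients $c_i$, the points $g(u_1),\dots,g(u_m)$ lie in $\mathbb{X}$, so $\sum_{i,j} c_i c_j\, k(g(u_i),g(u_j)) \ge 0$ is just the defining inequality for $k$ at these (possibly repeated) points, and symmetry is inherited from $k$.

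Next, (1) is the Schur product theorem phrased for kernels. Fix points $x_1,\dots,x_m$ and set $A=(k_1(x_i,x_j))$ and $B=(k_2(x_i,x_j))$; by hypothesis both are symmetric positive semidefinite, and I must show the entrywise (Hadamard) product $A\circ B=(k_1(x_i,x_j)k_2(x_i,x_j))$ is positive semidefinite. The key step is a spectral decomposition $A=\sum_a \lambda_a u^{(a)}(u^{(a)})^{\top}$ with $\lambda_a\ge 0$; then for any real vector $c$ one computes $c^{\top}(A\circ B)c = \sum_a \lambda_a\,(d^{(a)})^{\top} B\, d^{(a)}$ with $d^{(a)}_i = c_i u^{(a)}_i$, and each summand is nonnegative since $B\succeq 0$ and $\lambda_a\ge 0$. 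As $k_1k_2$ is plainly symmetric, (1) follows.

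Finally, (4) is where the real work lies, and I expect the passage to the limit to be the main obstacle. The strategy is discretization. First, $\sigma$-finiteness lets me write $\mathbb{X}=\bigcup_n X_n$ with $\mu(X_n)<\infty$ increasing, so dominated convergence (dominating by $|k|\in L^1(\mu\times\mu)$) reduces the claim to the finite measures $\mu\mres X_n$; thus I may assume $\mu$ finite. Then I approximate $\iint k\,d\mu\,d\mu$ by Riemann-type sums over finite measurable partitions $\{A_1,\dots,A_m\}$: choosing representatives $x_i\in A_i$ and taking $c_i=\mu(A_i)\ge 0$ in the defining inequality yields $\sum_{i,j}\mu(A_i)\mu(A_j)\,k(x_i,x_j)\ge 0$, i.e. the integral of the step kernel $\tilde k(s,t)=k(x_i,x_j)$ on $A_i\times A_j$ is nonnegative. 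The crux is to show these sums converge to $\iint k\,d\mu\,d\mu$ as the partitions refine; for a merely measurable kernel this is not automatic, and I would secure it by approximating $k$ in $L^1(\mu\times\mu)$ along a refining sequence of partitions, taking care of the diagonal contribution, which carries zero $\mu\times\mu$-mass when $\mu$ is non-atomic and is controlled by the nonnegativity $k(x,x)\ge 0$ at atoms. Passing to the limit then gives $\iint k\,d\mu\,d\mu\ge 0$.
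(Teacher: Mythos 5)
Your arguments for parts (1)--(3) are correct and complete: (2) and (3) follow immediately from the definition, and your Hadamard-product computation via the spectral decomposition of $A$ is the standard proof of the Schur product theorem. Note that the paper itself proves none of these: it cites \cite{BCR84} for (1)--(2), leaves (3) as a definition check, and cites \cite{RKSF} for (4). A self-contained proof is therefore a genuinely different route --- but then part (4) must be watertight, and yours is not.

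The gap is your treatment of the diagonal in (4), and it is a real one, not a technicality. With partition $\{A_i\}$, representatives $x_i\in A_i$, and weights $c_i=\mu(A_i)$, positive definiteness gives
\[
\sum_{i\ne j}\mu(A_i)\mu(A_j)\,k(x_i,x_j)\;\ge\;-\sum_i \mu(A_i)^2\,k(x_i,x_i),
\]
and since $k(x_i,x_i)\ge 0$ the right-hand side is $\le 0$: the diagonal terms enter the positive-definiteness inequality with a plus sign, so nonnegativity of $k$ on the diagonal works \emph{against} you --- you cannot discard those terms and keep the inequality --- rather than for you, as your last sentence claims. Worse, the hypothesis $k\in L^1(\mu\times\mu)$ gives no control whatsoever on $x\mapsto k(x,x)$, because when $\mu$ is non-atomic the diagonal is a $\mu\times\mu$-null set. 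For instance, on $\mathbb X=[0,1]$ with Lebesgue measure, $k(x,y)=(xy)^{-1/2}$ (set to $0$ when $x$ or $y$ is $0$) is positive definite and integrable, yet $k(x,x)=1/x$ is not $\mu$-integrable, and for suitable choices of representatives the diagonal part of your Riemann sum diverges; the full nonnegative sum then tends to $+\infty$, and its nonnegativity says nothing about $\iint k\,d\mu\,d\mu$. There is a second, related defect: for a merely measurable kernel, Riemann sums with arbitrarily chosen representatives need not converge to the integral at all --- take $k(x,y)=1_Q(x)1_Q(y)$ with $Q$ a dense null set and all representatives chosen in $Q$; the sums converge to $\mu(\mathbb X)^2$ while $\iint k\,d\mu\,d\mu=0$. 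So the bridge from your discrete inequality to the integral inequality is broken even off the diagonal.

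Both defects are repaired by one standard argument, which replaces the partition limit by an exact identity in expectation. Reduce to $\mu$ a probability measure as you did. For $M>0$ put $E_M=\{x:\,k(x,x)\le M\}$; since $k$ is real-valued, $E_M\uparrow\mathbb X$ as $M\to\infty$. If $\mu(E_M)>0$, let $X_1,\dots,X_m$ be i.i.d.\ with law $\nu_M:=\mu(\,\cdot\,\cap E_M)/\mu(E_M)$. Pointwise positive definiteness gives $\frac{1}{m^2}\sum_{i,j}k(X_i,X_j)\ge 0$ surely; taking expectations (legitimate, since the off-diagonal terms are $\nu_M\times\nu_M$-integrable and the diagonal terms lie in $[0,M]$) yields
\[
0\;\le\;\frac{m-1}{m}\iint k\,d\nu_M\,d\nu_M\;+\;\frac{M}{m},
\]
and letting $m\to\infty$ gives $\iint_{E_M\times E_M}k\,d\mu\,d\mu\ge 0$, which also holds trivially when $\mu(E_M)=0$. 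Letting $M\to\infty$ with dominated convergence (dominant $|k|$) gives $\iint k\,d\mu\,d\mu\ge 0$, and your $\sigma$-finite reduction then finishes. Randomizing the sample points is what fixes the representative problem, and the truncation by $E_M$ is what tames the diagonal.
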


\begin{proof}
    Property 1 and 2 can be found in \cite[p.69]{BCR84}. Property 3 can be easily checked by applying the definition \ref{positive definite kernel}. Property 4 can be found in \cite[p. 524, Property 21.2.12]{RKSF}.
\end{proof}

\section*{Acknowledgement}

The authors thank anonymous reviewers for their constructive comments. S. Tang was partially supported by    Faculty Early Career Development Award sponsored by University of California Santa Barbara, Hellman Family Faculty Fellowship  and the NSF DMS-2111303 and NSF DMS-2340631. H. Zhou was partly supported by NSF grant DMS-2109116. M. Tuerkoen wishes to thank Quyuan Lin for helpful comments and discussions.

\bibliography{references}
\bibliographystyle{alpha}

\end{document}